\DeclareSymbolFontAlphabet{\mathbb}{AMSb}
\DeclareSymbolFontAlphabet{\mathbbl}{bbold}
\DeclareMathAlphabet\mathbfcal{OMS}{cmsy}{b}{n}
\newtheorem{theorem}{Theorem}[section]
\newtheorem*{theorem*}{Theorem}
\newtheorem{lemma}[theorem]{Lemma}
\newtheorem{proposition}[theorem]{Proposition}
\newtheorem{corollary}[theorem]{Corollary}
\theoremstyle{definition}
\newtheorem{definition}[theorem]{Definition}
\newtheorem{example}[theorem]{Example}
\newtheorem{remark}[theorem]{Remark}
\newcommand{\op}[1]{\operatorname{#1}}
\newcommand{\defn}[1]{{\color{blue} \it {#1}}}
\newcommand{\CC}{\mathbb{C}}
\begin{document}

\title[Counting chains via the $W$-Laplacian]{Counting chains in the noncrossing partition lattice via the $W$-Laplacian}
\author{ Guillaume Chapuy,\ Theo Douvropoulos}
\thanks{This project (both authors) has received funding from the European Research
  Council (ERC) under the European Union’s Horizon 2020 research and
  innovation programme (grant agreement No. ERC-2016-STG 716083
  “CombiTop”).}

\newcommand{\Address}{{
  \bigskip
  \footnotesize
	
  Theo~Douvropoulos, \textsc{IRIF, UMR CNRS 8243, Universit\'e Paris Diderot, Paris 7, France}\par\nopagebreak
  \textit{E-mail address:} \texttt{douvr001@irif.fr}	

}}

\begin{abstract}
We give an elementary, case-free, Coxeter-theoretic derivation of the formula $h^nn!/|W|$ for the number of maximal chains in the noncrossing partition lattice $NC(W)$ of a real reflection group $W$. Our proof proceeds by comparing the Deligne-Reading recursion with a parabolic recursion for the characteristic polynomial of the $W$-Laplacian matrix considered in our previous work. We further discuss the consequences of this formula for the geometric group theory of spherical and affine Artin groups.
\end{abstract}

\date{}
\maketitle

\section{Introduction}


The lattice of noncrossing partitions $NC(W)$ of a reflection group $W$ has been a prominent figure in the lands of algebraic combinatorics and geometric group theory -- it holds dual citizenship! The general saga of Coxeter combinatorics involves imagining (and sometimes proving) generalizations of theorems that are known for the symmetric group $S_n$ to all reflection groups $W$. If moreover the old theorems involve 
the Catalan numbers or one of their variants, we are sure to have found ourselves working on Coxeter-Catalan combinatorics.

In the early 90s, Reiner \cite{reiner_classical} introduced pictorial generalizations of Kreweras' original noncrossing partitions (famously enumerated by the Catalan numbers \cite{krew}) associating them with the classical reflection groups $B_n$ and $D_n$. In the next few years, after many works including \cite{biane,brady_watt,bessis_dual} it became clear to the community that noncrossing partitions can be defined for all reflection groups and in a way that is both natural and useful (see also the surveys \cite{simion,mccammond}). The noncrossing lattice $NC(W)$ was defined then as the interval $[1,c]_{\leq_{\mathcal{R}}}$ in the group $W$ below a Coxeter element $c\in W$ under the absolute order $\leq_{\mathcal{R}}$.

One of the most attractive features of this definition was the remarkable numerological properties that $NC(W)$ exhibited. A main invariant of the lattice, its Zeta polynomial $\mathcal{Z}(NC(W),k)$ which counts chains of length $k$ in $NC(W)$, could be expressed with a single formula (valid for all reflection groups) involving only certain invariant theoretic constants of $W$. A special case of that is an even simpler formula for the number $MC(W)$ of \emph{maximal, saturated} chains in $NC(W)$. The following theorem is due to Chapoton \cite{chapoton} who combined results from the works  \cite{reiner_classical,ath_reiner}.

\begin{theorem}[{Chapoton's formula and chain number, \cite[Prop.~9]{chapoton}}]\label{Thm intro chapoton}\ \newline 
For an irreducible real reflection group $W$, of rank $n$ with Coxeter number $h$ and fundamental degrees $(d_i)_{i=1}^n$, the Zeta polynomial and number of maximal chains of $NC(W)$ are given by:
\[
i)\quad \mathcal{Z}(NC(W),k)=\prod_{i=1}^n\dfrac{kh+d_i}{d_i}\qquad\qquad\text{and}\qquad ii)\qquad MC(W)=\dfrac{h^nn!}{|W|}.
\]
\end{theorem}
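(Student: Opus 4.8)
The plan is to reduce part ii) to part i) and then prove the Zeta-polynomial identity i) by exhibiting a single recursion satisfied by both of its sides. For the reduction, I would invoke the standard fact that for any graded poset of rank $n$ with minimum and maximum, the number of maximal chains equals $n!$ times the leading coefficient of its Zeta polynomial, since the top-degree term of $\mathcal{Z}(P,k)$ (a polynomial of degree $n=\op{rank}(P)$) counts, via the binomial expansion, exactly the chains passing through all $n$ rank levels. Granting i), the polynomial $\prod_{i=1}^n (kh+d_i)/d_i$ has degree $n$ in $k$ with leading coefficient $h^n/\prod_i d_i$; since $\prod_i d_i=|W|$, this yields $MC(W)=n!\cdot h^n/|W|=h^n n!/|W|$, which is ii). So the entire content lies in i).

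For i), I would argue by induction on the rank $n$, showing that the two sides satisfy the same parabolic recursion and agree in the base case $n=1$ (where $NC(W)$ is a two-element chain, $h=d_1=2$, and both sides equal $k+1$). On the combinatorial side I would use the Deligne--Reading recursion, which expresses a derivative in $k$ of the chain count of $NC(W)$ as a weighted sum, over the maximal parabolic subgroups $W'\subset W$, of the chain counts of the $NC(W')$; structurally this reflects the fact that every lower interval $[1,w]$ in $NC(W)$ is itself isomorphic to the noncrossing partition lattice of a parabolic. On the algebraic side I would use the $W$-Laplacian $L_W$ from our previous work, together with the elementary matrix identity $\frac{d}{dx}\det(xI-L_W)=\sum_{i=1}^n \det(xI-L_W^{(i)})$, which expresses the derivative of the characteristic polynomial as the sum of the characteristic polynomials of the principal submatrices $L_W^{(i)}$ obtained by deleting the $i$-th row and column. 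The heart of the matter is that each such principal submatrix is, up to an explicit weight, the Laplacian $L_{W'}$ of a maximal parabolic, so that this matrix-calculus identity is literally a parabolic recursion of the same shape as Deligne--Reading.

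Assuming from our previous work that the spectrum of $L_W$ is governed by the fundamental degrees (so that, up to sign, $\det(xI-L_W)$ has the $d_i$ as its roots), an affine change of variable $x=kh$ together with normalization by $\prod_i d_i$ turns the characteristic polynomial into exactly $\prod_i (kh+d_i)/d_i$; matching the two recursions and the base case then forces this to coincide with $\mathcal{Z}(NC(W),k)$, closing the induction. The hard part will be establishing the precise dictionary between the two recursions: verifying that deleting the appropriate row and column of $L_W$ really produces the parabolic Laplacian $L_{W'}$, and that the combinatorial weight attached to each maximal parabolic in the Deligne--Reading recursion coincides with the cofactor weight produced by the characteristic-polynomial expansion. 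Getting these normalizations to agree term by term — rather than merely up to an overall constant, which would not suffice to transport the formula through the induction — is where I expect the real difficulty to lie.
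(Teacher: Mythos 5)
There is a genuine gap, and it sits at the heart of your argument: the spectral input you attribute to the $W$-Laplacian is not what the previous work provides. For an irreducible $W$, the operator $L_W$ is a $W$-equivariant sum of rank-one projections and, by Schur's lemma, acts as scalar multiplication by $h$ (Prop.~\ref{Prop: L_W is mult by h}); its characteristic polynomial is therefore $(x-h)^n$, with a single $n$-fold root. It is \emph{not} a polynomial having the fundamental degrees $d_1,\dots,d_n$ as roots, so no affine change of variable and normalization can turn it into $\prod_{i=1}^n (kh+d_i)/d_i$, whose roots are distinct. The eigenvalue data carried by $L_W$ and its parabolic restrictions is the multiset of Coxeter numbers, which is exactly enough to capture the leading coefficient $h^n/\prod_i d_i=h^n/|W|$ of the Zeta polynomial --- that is, part~(ii) --- but nothing finer. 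Relatedly, the recursion you invoke is misstated: the Deligne--Reading recursion is $MC(W)=\tfrac{h}{2}\sum_{s\in S}MC(W_{\langle s\rangle})$, a statement about maximal chains only, with no derivative in $k$. The recursion actually satisfied by $\mathcal{Z}(NC(W),k)$ is the Fomin--Reading recursion $\mathcal{Z}(NC(W),k)=\tfrac{kh+2}{2n}\sum_{s\in S}\mathcal{Z}(NC(W_{\langle s\rangle}),k)$, and solving it uniformly requires the identity \eqref{EQ: FR1} relating degrees and Coxeter numbers of parabolic subgroups, which is precisely what remains open (Sec.~\ref{Sec: Fomin-Reading}). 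Part~(i) is not given a uniform proof in this paper.

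Your reduction of (ii) to (i) via the leading coefficient is correct as far as it goes, and it is how (ii) follows from Chapoton's (classification-based) formula; but the uniform argument runs in the opposite direction. It proves (ii) directly, by induction on the rank: the Deligne--Reading recursion is translated via Lemma~\ref{Lem: simples to flats} into a sum over lines $L\in\mathcal{L}^1_{\mathcal{A}}$, reducing the claim to $h^{n-1}\cdot n=\sum_{L}\prod_{i=1}^{n-1}h_i(W_L)$, which is the coefficient of $t^1$ in the identity $(t+h)^n=\sum_{X\in\mathcal{L}_{\mathcal{A}}}\prod_i h_i(W_X)\,t^{\op{dim}(X)}$ of Theorem~\ref{Thm: rec cox nums}. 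Note also that this parabolic recursion is not obtained from principal submatrices: deleting a row and column of $L_W$ does not produce the Laplacian of a maximal parabolic (already false for $K_n$, where the reduced Laplacian is not $L(K_{n-1})$); the correct statement, Lemma~\ref{Lem: parab. rec. det(L_W+t)}, comes from grouping $k$-tuples of roots by the flat they span and runs over \emph{all} flats of $\mathcal{L}_{\mathcal{A}}$.
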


The second formula of Thm.~\ref{Thm intro chapoton} has a fascinating history. The number $MC(W)$ counts equivalently the minimum length reflection factorizations of a Coxeter element $c$, and with that interpretation it was conjectured by Looijenga \cite{Looijenga} to agree with the degree of a finite, weighted-homogeneous morphism in singularity theory, now known as the Lyashko-Looijenga ($LL$) map. It was then Arnold who first \cite[Thm.~11]{arnold_ICM} noticed the uniform expression $h^nn!/|W|$ for the degree of this $LL$ map.

Now, Looijenga's conjecture was settled almost immediately by Deligne (crediting Tits and Zagier) \cite{Deligne,kluitmann-thesis} who calculated the quantities $MC(W)$ and compared them with the already known degrees of the $LL$ maps. Deligne gave a uniform recursion on the set of factorizations, which led to a recurrence on the cardinalities $MC(W)$ (in terms of some parabolic subgroups of $W$). This same approach was rediscovered later by Reading \cite{reading_chains}, but both works solved this recurrence in a case-by-case way. The main contribution of our paper is a uniform derivation of the chain number formula of Thm.~\ref{Thm intro chapoton}:~(ii) \emph{via a case-free solution of the Deligne-Reading recurrence}. This is the second uniform but first \emph{Coxeter-theoretic} such proof; see Remark~\ref{Rem: Advantages of our proof}. The more general part~(i) of Thm.~\ref{Thm intro chapoton} seems to still be out of reach, although we discuss an approach in Sec.~\ref{Sec: Fomin-Reading}.

\subsection*{A case-free solution of the Deligne-Reading recurrence}

This argument became possible and in fact relatively easy after our recent work \cite{chapuy_douvr}; we briefly sketch the main points here. The Deligne-Reading recursion on the set $MC(W)$ is given below; if $S$ is the set of simple generators and $h$ the Coxeter number of $W$ then
\begin{equation}
MC(W)=\dfrac{h}{2}\cdot \sum_{s\in S}MC(W_{\langle s\rangle}),\label{eq intro DR recursion}
\end{equation}
where  $W_{\langle s\rangle}$ is the (standard) parabolic subgroup of $W$ generated by the simple reflections $S\setminus \{s\}$. To prove that the formula of Thm.~\ref{Thm intro chapoton}:~(ii) satisfies the corresponding recurrence, we need a seemingly complicated equality among the Coxeter numbers of $W$ and those of its parabolic subgroups (which may be reducible).  If we write $\{h_i(W)\}_{i=1}^n$ for the \emph{multiset of Coxeter numbers} of $W$, and translate the previous recursion from standard to arbitrary parabolic subgroups of $W$, we need to show that
\begin{equation}
h^{n-1}\cdot n=\sum_{L\in\mathcal{L}^1_{\mathcal{A}_W}}\prod_{i=1}^{n-1}h_i(W_L),\label{eq intro h^(n-1)-h_i(W_L)}
\end{equation}
where the sum is over $1$-dimensional flats (lines $L$) of the intersection lattice of $W$. As it turns out this is a special case of a more general theorem relating Coxeter numbers, which we give below. It is a corollary of the spectral study of the \defn{$W$-Laplacian}, a $(n\times n)$ matrix associated with $W$ (see Sec.~\ref{Sec: W-Laplacian}). To recover \eqref{eq intro h^(n-1)-h_i(W_L)}, one need only compare the coefficients of $t^1$ in the two sides of Thm~\ref{Thm: rec cox nums}.

\begin{restatable*}[{Parabolic recursion of Coxeter numbers, \cite{chapuy_douvr}}]{theorem}{hrecursion}\label{Thm: rec cox nums}
For an irreducible, real reflection group $W$ with reflection arrangement $\mathcal{A}$ and Coxeter number $h$, we have that 
\[
(t+h)^n=\sum_{X\in\mathcal{L}_{\mathcal{A}}}\prod_{i=1}^{\op{codim}(X)}h_i(W_X)\cdot t^{\op{dim}(X)},
\]
where $\{h_i(W_X)\}$ denotes the \emph{multiset} of Coxeter numbers of $W_X$.
\end{restatable*}

We try to keep the paper self sufficient, and in doing so we develop in some detail in Sec.~\ref{Sec: prelim} the various objects that appear in the discussion above. A reader familiar with Coxeter groups could directly read the $W$-Laplacian construction in Sec.~\ref{Sec: W-Laplacian} and then the quick proof in Sec.~\ref{Sec: The proof}. In Sec.~\ref{Sec: Fomin-Reading} we briefly discuss a possible approach towards the complete proof of Chapoton's formula.

\subsection*{Implications on the geometric group theory of $W$}
One of the most significant and less well known applications of the formula for $MC(W)$ is in the proof of the concordance between the standard and dual braid presentations of spherical Artin groups. The equivalence of the two presentations has had tremendous influence in recent years but no uniform proof for it was known. After Bessis' work \cite{Bessis}, the last ingredient needed that still relied on case-by-case calculations was in fact just the formula for the chain number $MC(W)$; we demonstrate this in Section~\ref{Sec: implications braid group}.

\section{Reflection groups and the noncrossing partition lattice}
\label{Sec: prelim}

In this section we review some of the basic theory of real reflection groups, including the Coxeter complex geometry, and we introduce the noncrossing partition lattice. We point the reader to the standard references \cite{kane-book-reflection-groups,HUM} for anything we might leave unexplained.

A {\color{blue} \emph{(real) reflection group} $W$} of rank $n$ is a finite subgroup of $\op{GL}(\mathbb{R}^n)$ generated by Euclidean reflections. We will say that $W$ is {\color{blue}\emph{irreducible}} if there is no non-trivial, proper $W$-invariant linear subspace of $\mathbb{R}^n$. The collection of fixed hyperplanes of \emph{all} reflections of $W$ is called the {\color{blue}\emph{reflection arrangement}} $\mathcal{A}_W$ (or simply $\mathcal{A}$ in what follows) and its geometry was shown by Coxeter \cite{coxeter-annals} to determine much of the theory of $W$. In particular, the hyperplanes of $\mathcal{A}$ divide the complement $\mathbb{R}^n\setminus \mathcal{A}$ into simplicial open chambers whose closures form fundamental domains for the action of $W$. Given a choice of such a {\color{blue}\emph{fundamental chamber $\mathcal{C}$}}, the reflections across its facets determine a set $S$ of {\color{blue} \emph{simple generators}} for $W$ and the angles between them lead to the Coxeter presentation of $W$ (see \S~\ref{Sec: Artin-presentation}).

\subsection{The Coxeter complex and parabolic subgroups}

Given such a set $S:=\{s_i\}_{i=1}^n$ of simple generators for $W$, the groups $W_I:=\langle I\rangle$ generated by any subset $I\subset S$ are called {\color{blue}\emph{standard parabolic subgroups}} of $W$ and are also (real) reflection groups. They correspond bijectively to the faces of the fundamental chamber $\mathcal{C}$ being in fact, after a theorem of Steinberg, their pointwise stabilizers. More generally, Steinberg's theorem (see \cite[\S~4.2.3]{broue-book-braid-groups}) states that the pointwise stabilizer of \emph{any} subset of $\mathbb{R}^n$  fixes a whole linear subspace $X$ that is the intersection of some collection of hyperplanes of $W$. Such spaces $X$ are called {\color{blue}\emph{flats}} of the reflection arrangement $\mathcal{A}$ of $W$ and they form the {\color{blue}\emph{intersection lattice $\mathcal{L}_{\mathcal{A}}$}}; they are in bijection with their stabilizers $W_X$ which are simply called {\color{blue} parabolic subgroups}. We will denote $W$-orbits of flats via $[X]\in\mathcal{L}_{\mathcal{A}}/W$; they correspond to orbits of parabolic subgroups under conjugation.

The intersection of the reflection arrangement $\mathcal{A}$ with the unit sphere $\mathbb{S}^{n-1}$ in $\mathbb{R}^n$ determines a triangulation of $\mathbb{S}^{n-1}$ which is known as the {\color{blue} \emph{Coxeter complex}} of $W$, denoted $C(W)$. It has $|W|$-many facets which correspond to the chambers of $\mathcal{A}$, while its faces of smaller dimension correspond to chambers of the \emph{restricted arrangements} $\mathcal{A}^X:=\{H\in\mathcal{A}\ |\ H\supset X\}$ for any flat $X\in\mathcal{L}$. The (pointwise) stabilizer of any face $F$ of the Coxeter complex $C(W)$ is then the parabolic subgroup $W_X$, where $X$ is the linear span of $F$ in $\mathbb{R}^n$. Since $W$ acts transitively on the facets of the complex $C(W)$, any face is in the same $W$-orbit as a face of the fundamental chamber $\mathcal{C}$. This means that the collection of parabolic subgroups $W_X$ is the complete family of subgroups of $W$ that are conjugate to some standard parabolic $W_I$ for any subset $I\subset S$.

It turns out however that the standard parabolic subgroups $W_I$ do not form a set of representatives; that is, we might have $W_I\cong W_J$ for different subsets $I$ and $J$ of $S$. It will be important in what follows to have a finer understanding of this situation. We write $\nu^{}_X$ for the \emph{number of standard parabolic subgroups conjugate to $W_X$} (clearly this is really indexed by the $W$-orbit $[X]$ but we avoid the heavier notation $\nu^{}_{[X]}$). Following Orlik and Solomon \cite[(4.2)]{OS_nu} it is easy to give a formula for $\nu^{}_X$ via a simple double counting argument. We want to enumerate all pairs $(F,C)$ where $F\subset C$ is a face of the facet $C$ of the Coxeter complex $C(W)$, such that $W_F\cong W_X$. Since all chambers are $W$-conjugate, it is clear that there are $\nu^{}_X\cdot |W|$ such pairs. On the other hand, if $c(\mathcal{A}^X)$ denotes the number of chambers in the restricted arrangement $\mathcal{A}^X$, and $N(X)$ the \emph{setwise} normalizer of the flat $X$, there is a total of $[W:N(X)]\cdot c(\mathcal{A}^X)$ faces with $W$-stabilizer conjugate to $W_X$. Each of those faces is incident to $|W_X|$-many chambers so that we have
$$ \nu^{}_X\cdot |W|=\big([W:N(X)]\cdot c(\mathcal{A}^X)\big)\cdot |W_X|,$$ by the double counting argument, and this in turn implies the formula \begin{equation}
\nu^{}_X=\dfrac{c(\mathcal{A}^X)}{[N(X):W_X]}.\label{EQ: OS formula nu_X}
\end{equation}

We are now ready to prove a short technical lemma that will help us translate the Deligne-Reading recursion \eqref{eq intro DR recursion} from one on the simple generators of $W$ to a recursion on flats of dimension $1$ (lines) of the intersection lattice $\mathcal{L}_{\mathcal{A}}$. Recall the notation from the introduction where $W_{\langle s\rangle}$ is defined as the standard parabolic subgroup $W_J$ with $J=S\setminus\{s\}$. We further write $\mathcal{L}^k_{\mathcal{A}}$ for the collection of flats of the arrangement $\mathcal{A}$ that have dimension equal to $k$.

\begin{lemma}[From recursions on $S$ to recursions on flats]\label{Lem: simples to flats}\ 
Let $W$ be a finite Coxeter group with reflection arrangement $\mathcal{A}$, set of simple generators $S$, and lattice of flats $\mathcal{L}_{\mathcal{A}}$. If $F$ is any function on Coxeter types\footnote{That is, a function on (finite) Coxeter groups $W$ that only depends on the isomorphism type of $W$.} defined at least for $W$ and its maximal parabolic subgroups $W_{\langle s\rangle}$ and if $a\in\mathbb{R}$ is an arbitrary parameter, then the following two equations are equivalent:
$$F(W)=a\cdot \sum_{s\in S}F(W_{\langle s\rangle})\quad \iff \quad |W|\cdot F(W)=2a\cdot \sum_{L\in\mathcal{L}_{\mathcal{A}}^1}|W_L|\cdot F(W_L).$$
\end{lemma}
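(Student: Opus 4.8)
The plan is to reduce the asserted equivalence to a single identity that holds unconditionally in $F$. The two displayed equations have the shape $F(W)=a\,R_1$ and $|W|\,F(W)=2a\,R_2$, where $R_1:=\sum_{s\in S}F(W_{\langle s\rangle})$ and $R_2:=\sum_{L\in\mathcal{L}_{\mathcal{A}}^1}|W_L|\,F(W_L)$. Hence it suffices to establish, for \emph{every} function $F$ on Coxeter types, the identity
\begin{equation*}
|W|\cdot\!\sum_{s\in S}F(W_{\langle s\rangle})=2\!\sum_{L\in\mathcal{L}_{\mathcal{A}}^1}\!|W_L|\cdot F(W_L).\tag{$\star$}
\end{equation*}
Indeed, granting $(\star)$, multiplying the first equation by $|W|$ turns its right-hand side into $2a\,R_2$, which is the second equation; dividing the second by $|W|$ reverses the step. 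Since $(\star)$ is an identity in $F$, this makes the two equations logically equivalent.

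To prove $(\star)$ I would expand both sides over the set $\mathcal{L}_{\mathcal{A}}^1/W$ of $W$-orbits $[L]$ of lines, which coincides with the set of conjugacy classes of rank-$(n-1)$ (i.e.\ maximal) parabolic subgroups. On the left, each maximal standard parabolic $W_{\langle s\rangle}$ is the pointwise stabilizer of the line cut out by the reflecting hyperplanes of the simple reflections other than $s$, and conversely every line is conjugate to such a stabilizer; partitioning $S$ by the class of $W_{\langle s\rangle}$ and using that $F$ depends only on isomorphism type yields $\sum_{s\in S}F(W_{\langle s\rangle})=\sum_{[L]}\nu_L\,F(W_L)$, where $\nu_L$ is the number of standard parabolics conjugate to $W_L$. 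On the right, the $W$-orbit of $L$ has size $[W:N(L)]$, so $\sum_{L\in\mathcal{L}_{\mathcal{A}}^1}|W_L|\,F(W_L)=\sum_{[L]}[W:N(L)]\,|W_L|\,F(W_L)$. Comparing coefficients of $F(W_L)$, the identity $(\star)$ reduces to the purely numerical per-orbit statement
\begin{equation*}
|W|\,\nu_L=2\,[W:N(L)]\,|W_L|.
\end{equation*}

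The crux — and the only step I expect to require care — is this per-orbit identity, which localizes the enigmatic factor of $2$. Substituting the Orlik--Solomon formula $\nu_L=c(\mathcal{A}^L)/[N(L):W_L]$ from \eqref{EQ: OS formula nu_X} together with $[N(L):W_L]=|N(L)|/|W_L|$ and $[W:N(L)]=|W|/|N(L)|$, everything cancels and the statement collapses to the single clean fact $c(\mathcal{A}^L)=2$. This is geometrically evident: a line $L$ meets the sphere $\mathbb{S}^{n-1}$ in exactly two antipodal points, so its two faces in the Coxeter complex $C(W)$ are these endpoints, each with stabilizer $W_L$. Pinning this down — and making sure the passage from the generating set $S$ to orbit representatives is counted with exactly the multiplicity $\nu_L$ — is the heart of the argument.

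Alternatively, one can obtain $(\star)$ in a single stroke, bypassing the orbit bookkeeping, by double counting the incident pairs $(v,C)$ with $C$ a facet of $C(W)$ and $v$ one of its $n$ vertices, each pair weighted by $F(W_v)$. Counting by facets gives $|W|\sum_{s\in S}F(W_{\langle s\rangle})$, since $W$ permutes the $|W|$ facets transitively and the multiset of vertex-stabilizer types of any facet equals $\{[W_{\langle s\rangle}]\}_{s\in S}$; counting by vertices gives $\sum_v|W_v|\,F(W_v)=2\sum_{L}|W_L|\,F(W_L)$, using that the facets through a fixed $v$ form its star, a copy of the Coxeter complex of $W_v$ with $|W_v|$ facets, and that each line carries two antipodal vertices. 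In either route the factor $2$ is simply the two endpoints of a line, and what remains is routine.
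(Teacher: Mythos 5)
Your proof is correct and follows essentially the same route as the paper: both arguments group the sum over $S$ into $W$-orbits of lines via the multiplicities $\nu_L$, invoke the Orlik--Solomon formula $\nu_L=c(\mathcal{A}^L)/[N(L):W_L]$ together with the observation $c(\mathcal{A}^L)=2$, and then unfold the orbit sum using $[W:N(L)]$. Your repackaging as the unconditional identity $(\star)$ and the closing double-count of vertex--facet incidences are just reorganizations of the same computation (the latter is in fact how the paper derives the $\nu_X$ formula itself).
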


\begin{proof}
We can start with the first recursion and rewrite its right hand side as 
$$a\cdot \sum_{s\in S}F(W_{\langle s\rangle})=a\cdot\!\!\!\!\!\!\! \sum_{[L]\in\mathcal{L}^1_{\mathcal{A}}/W}\nu_{L}\cdot F(W_L),$$ where the summation is over $W$-orbits of 1-dimensional flats and where $\nu^{}_L$, as earlier, counts the number of standard parabolic subgroups conjugate to $W_L$. To apply formula \eqref{EQ: OS formula nu_X} notice that for an 1-dimensional flat $L$, we always have $c(\mathcal{A}^L)=2$, so that the right hand side becomes 
$$a\cdot\!\!\!\!\!\!\! \sum_{[L]\in\mathcal{L}^1_W/W}\nu_{L}\cdot F(W_L)=2a\cdot\!\!\!\!\!\!\! \sum_{[L]\in\mathcal{L}^1_W/W}\dfrac{F(W_L)}{[N(L):W_L]}=2a\cdot\!\!\sum_{L\in\mathcal{L}^1_{\mathcal{A}}}\dfrac{1}{[W:N(L)]}\cdot \dfrac{F(W_L)}{[N(L):W_L]},$$ with the second equality replacing the summation over \emph{orbits} of flats to a summation over flats (there are $[W:N(L)]$-many flats conjugate to $L$). After canceling the $|N(L)|$ terms and multiplying out by $|W|$ this is precisely the second recursion; the proof is complete.
\end{proof}

\begin{remark}
The simple lemma above is used twice in this paper (Thm.~\ref{Thm: main} and Sec.~\ref{Sec: Fomin-Reading}) but we believe it is interesting on its own (it encodes some of the geometry of the Coxeter complex). For instance, the following two equations are equivalent because of it but only the first one is trivial: $$1=\dfrac{1}{n}\cdot\sum_{s\in S}1 \quad\quad\text{ versus }\quad\quad n\cdot |W|=2\cdot \sum_{L\in\mathcal{L}_W^1}|W_L|.$$
\end{remark}

\begin{remark}
There is of course a similar version of this Lemma for recursions that run through arbitrary \emph{subsets} of simple reflections (i.e. recursions over \emph{standard} parabolic subgroups) on the left hand side and over any flats (i.e. recursions over all parabolic subgroups) on the right hand side.
\end{remark}

\subsection{Coxeter elements, Coxeter numbers, and the noncrossing partition lattice}
\label{Sec: Cox elts, NC(W)}

Given a system of simple generators $S:=\{s_i\}_{i=1}^n$ of $W$, the product $s_1\cdots s_n$, in \emph{any} order, is called a Coxeter element for $W$. Coxeter made some remarkable observations on these products already in \cite{coxeter-annals}; they are all conjugate [ibid, Thm.~11] and their common order $h$ determines the number $N$ of reflections of $W$ (so that $N=|\mathcal{A}|$) via the formula [ibid, Thm.~16] \begin{equation}2N=hn.\label{Eq: 2N=hn}\end{equation} 
Any element conjugate to one of these products $s_{i_1}\cdots s_{i_n}$ is a Coxeter element for a conjugate simple system. We therefore adopt the following definition.

\begin{definition}
For a real reflection group $W$, a {\color{blue} \emph{Coxeter element}} $c$ is defined as any element conjugate to some product $s_{i_1}\cdots s_{i_n}$ (in any order) of the simple generators $s_i$ of $W$. The Coxeter elements of $W$ form a single conjugacy class and their  common order $h=|c|$ is called the {\color{blue} \emph{Coxeter number}} of $W$.
\end{definition}

Coxeter's original proof \cite[Thm.~16]{coxeter-annals} of formula \eqref{Eq: 2N=hn} was based on the classification of real reflection groups. A few years later Steinberg gave a uniform argument for it \cite[4.2]{stein_hn} which we briefly review as its ideas led to the Deligne-Reading recursion \eqref{eq intro DR recursion} we discussed in the introduction. After fixing a simple system $S$ for $W$, we can color the vertices of the corresponding Coxeter diagram red and blue, so that adjacent vertices have different colors. Then the product $s_{i_1}\cdots s_{i_k}\cdot s_{j_1}\cdots s_{j_{n-k}}$ where the $s_i$'s are red and the $s_j$'s blue is known as a {\color{blue} \emph{bipartite Coxeter element}}. Steinberg gave an explicit construction of a $2$-dimensional plane $P$ (now known as the \emph{Coxeter plane}) on which a bipartite Coxeter element $c$ acts as a rotation of order $h$. All reflection hyperplanes of $W$ intersect $P$ transversely and determine on it the rank-$2$ arrangement of the dihedral group $I_2(h)$ (i.e. with $h$-many lines through the origin). In this way, these lines of $P$ form a partition of the reflections of $W$ and in fact any two consecutive lines correspond to a system of simple generators for $W$. This is sufficient to prove formula \eqref{Eq: 2N=hn}, but in fact we can get the slightly stronger version below.

\begin{proposition}[{\cite{stein_hn}}]\label{Prop: action on R by bipart. c}
For a real reflection group $W$ with Coxeter number $h$ and a choice of simple generators $S$, the action by conjugation of a bipartite Coxeter element $c\in W$ divides the set of reflections $\mathcal{R}$ of $W$ into orbits, each of which has either size $h/2$ and contains a single simple generator from $S$ or it has size $h$ and contains two simple generators of $S$.
\end{proposition}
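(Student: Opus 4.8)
The plan is to transport the entire statement into the geometry of the Coxeter plane $P$ supplied by Steinberg's construction above, and then reduce it to elementary bookkeeping on a cyclic index set. Write $c=c_Rc_B$, where $c_R,c_B$ are the (commuting-product) involutions built from the red and blue simple reflections $S_R,S_B$. Recall that $c$ acts on $P$ as a rotation of order $h$, that $c_R$ inverts $c$ and hence normalizes $\langle c\rangle$ and preserves $P$, and that every reflecting hyperplane $H_r$ meets $P$ transversely in one of the $h$ equally spaced lines $L_0,\dots,L_{h-1}$ (at angles $k\pi/h$) of the $I_2(h)$ arrangement recalled before the statement. Assigning to each reflection $r\in\mathcal R$ the line $H_r\cap P$ partitions $\mathcal R$ into fibers $\mathcal R_0,\dots,\mathcal R_{h-1}$, and since $c$ preserves $P$ we have $c(H_r)\cap P=c|_P(H_r\cap P)$; thus conjugation by $c$ carries $\mathcal R_j$ to $\mathcal R_{j+2}$, that is, it acts on the index set $\mathbb Z/h$ by the shift $j\mapsto j+2$ (the order-$h$ rotation acting on lines spaced $\pi/h$ apart).

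First I would read off the orbit sizes. Let $p$ denote the order of $2$ in $\mathbb Z/h$, so $p=h$ for $h$ odd and $p=h/2$ for $h$ even. The orbit of a reflection $r$ lies over the coset $j_0+2\mathbb Z/h$ of lines; since $c^mr=r$ (for $m$ the orbit length) forces $2m\equiv0\bmod h$, the order $p$ divides $m$, while $m\mid h$. Hence $m\in\{h/2,h\}$, and a short count shows the orbit visits exactly $p$ distinct lines, each either once (when $m=h/2$, or when $h$ is odd) or twice (when $m=h$ and $h$ is even). This already yields the claimed sizes.

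Next I would locate the simple reflections among the fibers. The crucial input is the fact recalled from Steinberg's construction, that two consecutive lines carry exactly the reflections of a simple system. Applied to the two adjacent distinguished lines $\ell_R,\ell_B$ — the mirrors of the reflections $c_R|_P,c_B|_P$, which make an angle $\pi/h$ since their product is $c|_P$ — this gives $|\mathcal R_{\ell_R}|+|\mathcal R_{\ell_B}|=n$. Now each red $s\in S_R$ satisfies $c_Rsc_R^{-1}=s$, so its line is preserved by the reflection $c_R|_P$ and therefore equals $\ell_R$ or $\ell_R^\perp$; the perpendicular alternative is excluded because $s$, being simple, must lie on one of the two walls $\ell_R,\ell_B$. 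Thus $S_R\subseteq\mathcal R_{\ell_R}$ and $S_B\subseteq\mathcal R_{\ell_B}$, and the count $|\mathcal R_{\ell_R}|+|\mathcal R_{\ell_B}|=n=|S|$ forces $\mathcal R_{\ell_R}=S_R$ and $\mathcal R_{\ell_B}=S_B$ exactly. Consequently the only fibers meeting $S$ are these two, and every reflection lying over $\ell_R$ (resp.\ $\ell_B$) is automatically a red (resp.\ blue) simple generator.

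Combining the two ingredients finishes the argument: distinct orbit elements over the same line are distinct reflections, so an orbit passing through $\ell_R$ (resp.\ $\ell_B$) exactly $k$ times meets exactly $k$ distinct red (resp.\ blue) simple generators. For $h$ odd every orbit has size $h$ and sweeps all $h$ lines once, hence hits $\ell_R$ and $\ell_B$ once each, contributing two simple generators; for $h$ even an orbit lives over lines of a single parity, so it meets only one of $\ell_R,\ell_B$, once if its size is $h/2$ and twice if its size is $h$, contributing one or two same-coloured simple generators. In every case the orbit contains a single simple reflection when its size is $h/2$ and exactly two when its size is $h$. I expect the only genuine obstacle to be the geometric identification $\mathcal R_{\ell_R}=S_R,\ \mathcal R_{\ell_B}=S_B$ — pinning down which lines of $P$ the simple walls project to and excluding the perpendicular alternative — which is exactly the content of Steinberg's explicit construction of the Coxeter plane; granting that, the rest is the elementary index bookkeeping above.
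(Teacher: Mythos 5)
Your argument is correct and follows essentially the same route as the paper, which proves this proposition only by the sketch preceding it: Steinberg's Coxeter plane, the order-$h$ rotation, the $I_2(h)$ line arrangement, and the fact that consecutive lines carry simple systems; you simply make the resulting index bookkeeping on $\mathbb{Z}/h$ explicit, and your orbit-size and orbit-versus-simple counts all check out. The one step you rightly flag as the genuine geometric input --- identifying the fibers over $\ell_R,\ell_B$ with $S_R,S_B$ exactly, and in particular excluding the perpendicular line $\ell_R^{\perp}$ when $h$ is even --- is precisely the content of Steinberg's explicit construction, which the paper likewise invokes without reproving.
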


\begin{remark}
Since the Coxeter elements form a single conjugacy class in $W$, they must define the same orbit structure on $\mathcal{R}$. However, the finer structure regarding the intersections of these orbits with the simple system $S$ does not hold for arbitrary Coxeter elements (the property of being bipartite is sufficient). For example, in the symmetric group $S_n$ with set of simple generators $S:=\{(12),\ldots ,(n-1,n)\}$, the element $c:=(12\cdots n)$ is a Coxeter element but not bipartite with respect to $S$; here in fact the simple generators all lie in the same orbit under $c$ (which further contains a last element $(1n)$). 
\end{remark}

The set of simple generators $S$ of $W$ determines a natural length function $\ell_S$ on $W$ which then can be used to define the Bruhat order of $W$. In a similar fashion we may consider the set of all reflections $\mathcal{R}$ of $W$ as a generating set and define the (absolute) {\color{blue} \emph{reflection length} $\ell_{\mathcal{R}}(w)$} of an element $w\in W$ as the smallest number $k$ for which we can write a factorizations $w=t_1\cdots t_k$ in reflections $t_i\in\mathcal{R}$. Then we have a natural order $\leq_{\mathcal{R}}$ on $W$ where $u\leq_{\mathcal{R}} v$ if and only if $\ell_{\mathcal{R}}(u)+\ell_{\mathcal{R}}(u^{-1}v)=\ell_{\mathcal{R}}(v)$; we call it the {\color{blue} \emph{absolute reflection order}} of $W$.

\begin{definition}\label{Defn: NC(W)}
For a real reflection group $W$ and a Coxeter element $c\in W$ we define the {\color{blue}\emph{noncrossing partition lattice} $NC(W)$} as the interval $[1,c]_{\leq_{\mathcal{R}}}$ in $W$ under the absolute reflection order $\leq_{\mathcal{R}}$. It is a lattice after work of Brady and Watt \cite{BW_lattice} (see also \cite{reading_lattice} for an alternative proof).
\end{definition}

\begin{remark}\label{Rem NC(W) indep of c}
The noncrossing lattice $NC(W)$ with its definition above depends only \emph{superficially} on the choice of Coxeter element $c$. Indeed, if $c'=w^{-1}cw$ is a different Coxeter element, then conjugation by $w$ is a poset isomorphism between the intervals $[1,c]_{\leq_{\mathcal{R}}}$ and $[1,c']_{\leq_{\mathcal{R}}}$ (this is because the set of reflections $\mathcal{R}$ is a union of conjugacy classes).
\end{remark}

As we discussed in the introduction, the noncrossing partition lattice $NC(W)$ has become a central object of interest in the Coxeter combinatorics, representation theory, and geometric group theory of $W$. On the combinatorial side, we will focus on its {\color{blue} \emph{Zeta polynomial}}, a function that counts the number of chains of given lengths in $NC(W)$:
\begin{equation}
\mathcal{Z}\big( NC(W),k\big):=\#\big\{1\leq_{\mathcal{R}} w_1\leq_{\mathcal{R}}\cdots \leq_{\mathcal{R}}w_k\leq_{\mathcal{R}} c\big\}.\label{Eq: Defn Z(NC(W))}
\end{equation}  

The Zeta polynomial of the noncrossing partition lattice has a remarkable formula; before stating it we need to recall some basic facts of the invariant theory of reflection groups $W$ \cite[Ch.~3]{HUM}. The action of $W$ on the ambient space $V\cong\mathbb{R}^n$ induces an action on the polynomial algebra $\CC[V]:=\op{Sym}(V^*)$ via $(w*f)(v):=f(w^{-1}\cdot v)$ and its invariant subalgebra $\CC[V]^W$ can always be generated by a set of algebraically independent, homogeneous polynomials $f_1,\ldots,f_n$, which will be called \emph{fundamental invariants} of $W$. Even though there are different choices for them, the collection of their degrees is uniquely determined. We write $d_i:=\op{deg}(f_i)$, assume that $d_{i+1}\geq d_i$, and call them the {\color{blue}\emph{fundamental degrees}} of $W$. These numbers $d_i$ further determine the eigenvalues of Coxeter elements (they are given as the collection $\{e^{2\pi i\cdot (d_i-1)/h}\},\ i=1,\ldots, n$) and always satisfy $d_1=2$, $d_n=h$, and $d_i+d_{n-i}=2+h$. We are now ready to state Chapoton's formula.

\begin{theorem}[{\cite[Prop.~9]{chapoton}}]\label{Prop: Chapoton's formula}
For an irreducible real reflection group $W$ of rank $n$ with Coxeter number $h$ and fundamental degrees $(d_i)_{i=1}^n$, the Zeta polynomial \eqref{Eq: Defn Z(NC(W))} of its noncrossing partition lattice $NC(W)$ is given as $$\mathcal{Z}\big(NC(W),k\big)=\prod_{i=1}^n\dfrac{kh+d_i}{d_i}.$$
\end{theorem}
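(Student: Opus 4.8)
The plan is to transpose the strategy behind part~(ii)---an additive recursion over parabolic subgroups, the flat-translation Lemma~\ref{Lem: simples to flats}, and the $W$-Laplacian identity of Theorem~\ref{Thm: rec cox nums}---to the entire Zeta polynomial, arguing by induction on the rank $n$ and, within a fixed rank, on the multichain length $k$. The base cases are immediate: $\mathcal{Z}(NC(W),0)=1$, and for $W=A_1$ one has $h=d_1=2$ and $\mathcal{Z}(NC(A_1),k)=k+1=(2k+2)/2$. For the inductive engine I would use the well-known self-similar structure of the lattice: for every $w\leq_{\mathcal{R}} c$ the lower interval $[1,w]$ is isomorphic to $NC(W_w)$, where $W_w$ is the parabolic subgroup fixing $\op{Fix}(w)$ and $w$ is one of its Coxeter elements (see \cite{Bessis}). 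Deleting the top element $w_k$ of a multichain then gives the elementary identity
\[
\mathcal{Z}\big(NC(W),k\big)-\mathcal{Z}\big(NC(W),k-1\big)=\sum_{\substack{[X]\in\mathcal{L}_{\mathcal{A}}/W\\ \op{dim}(X)\geq 1}} N_{[X]}\cdot\mathcal{Z}\big(NC(W_X),k-1\big),
\]
where the full-rank term $X=\{0\}$ (for which $W_X=W$) has been moved to the left, $N_{[X]}$ counts the elements $w\leq_{\mathcal{R}} c$ with $W_w$ of type $[X]$, and for a reducible parabolic $W_X=\prod_j W_X^{(j)}$ the Zeta polynomial factors as the product $\mathcal{Z}(NC(W_X),m)=\prod_j\mathcal{Z}(NC(W_X^{(j)}),m)$ of the irreducible formulas over its components.

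Since every $X$ with $\op{dim}(X)\geq 1$ gives a \emph{proper} parabolic, the inductive hypothesis on rank supplies the product formula for every term on the right, and the recursion then determines $\mathcal{Z}(NC(W),k)$ from $\mathcal{Z}(NC(W),k-1)$ by induction on $k$. Substituting the conjectured form on both sides, the whole of Theorem~\ref{Prop: Chapoton's formula} reduces to verifying, for all $k$, the master identity
\[
\prod_{i=1}^{n}\frac{kh+d_i}{d_i}-\prod_{i=1}^{n}\frac{(k-1)h+d_i}{d_i}=\sum_{\substack{[X]\in\mathcal{L}_{\mathcal{A}}/W\\ \op{dim}(X)\geq 1}} N_{[X]}\prod_{j}\prod_{i}\frac{(k-1)h^{(j)}+d_i^{(j)}}{d_i^{(j)}},
\]
the inner double product running over components and fundamental degrees of $W_X$. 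Here I would invoke the all-parabolics analogue of Lemma~\ref{Lem: simples to flats} (noted in the Remark following it) to trade the orbit sum, with its normalizer data hidden inside the $N_{[X]}$, for a cleaner sum over all flats of $\mathcal{L}_{\mathcal{A}}$, in the hope of matching the shape of the $W$-Laplacian identity of Theorem~\ref{Thm: rec cox nums}.

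The hard part will be the master identity itself, and it is of a strictly higher order of difficulty than in the $MC(W)$ case. Extracting the top power of $k$ collapses it to a single linear relation among Coxeter numbers and the type-counts, of the same flavour as the chain-number identity~\eqref{eq intro h^(n-1)-h_i(W_L)}; but the lower powers of $k$ involve the \emph{individual} fundamental degrees $d_1,\dots,d_n$ of $W$ and of all its parabolics, not merely their Coxeter numbers. Theorem~\ref{Thm: rec cox nums} is simply blind to this data: its left-hand side $(t+h)^n$ remembers only $h$ and $n$, so it cannot by itself produce the required subleading relations. A proof along these lines therefore hinges on two genuinely new ingredients---a $k$-weighted refinement of the Laplacian identity whose summands are the products $\prod_i\big(k\,h_i(W_X)+d_i(W_X)\big)$ rather than $\prod_i h_i(W_X)$, together with a uniform, case-free evaluation of the type-counts $N_{[X]}$. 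Isolating and proving such a degree-sensitive master identity is, to our understanding, exactly what the spectral methods of \cite{chapuy_douvr} do not yet deliver, and is the reason part~(i) of Theorem~\ref{Prop: Chapoton's formula} remains out of reach here.
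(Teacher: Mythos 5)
Your proposal is not a proof, and you say as much in your final paragraph: everything is reduced to a ``master identity'' that you explicitly cannot establish, plus a uniform evaluation of the type-counts $N_{[X]}$ that you also do not supply. That is a genuine gap, but it is worth saying plainly that the paper is in the same position: Theorem~\ref{Prop: Chapoton's formula} is \emph{not} proven in this paper. It is quoted from \cite{chapoton}, whose proof rests on the classification of real reflection groups, and the introduction states outright that a uniform proof of part~(i) ``seems to still be out of reach.'' Section~\ref{Sec: Fomin-Reading} records exactly the reduction you are attempting and stops at the unproven Coxeter-theoretic identities \eqref{EQ: FR1} and \eqref{EQ: FR2}, for precisely the reason you identify: Theorem~\ref{Thm: rec cox nums} only sees the multiset of Coxeter numbers $(t+h)^n$, whereas the subleading coefficients in $k$ require the individual fundamental degrees of $W$ and of all its parabolics. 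So your diagnosis of the obstruction is accurate and coincides with the paper's; what you have written is an honest research plan, not a proof of the stated theorem.

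Two remarks on the route you chose, compared with the one sketched in Section~\ref{Sec: Fomin-Reading}. First, your recursion (delete the top element $w_k$ of the multichain, sum over the type of $\op{Fix}(w_k)$) runs over \emph{all} parabolic orbits and its weights $N_{[X]}$ are the Kreweras numbers; these themselves lack a uniform evaluation except for lines, where formula \eqref{EQ: krew nums} already depends on Hurwitz transitivity, known only case by case (see \S\ref{Sec: The well-generated case}). The paper's preferred recursion is instead the Fomin--Reading one \cite[Prop.~8.3]{FR}, $\mathcal{Z}(NC(W),k)=\frac{kh+2}{2n}\sum_{s\in S}\mathcal{Z}(NC(W_{\langle s\rangle}),k)$, which stays over corank-one \emph{standard} parabolics, has a uniformly proven combinatorial origin (the cyclic action on the generalized cluster complex), and converts cleanly via Lemma~\ref{Lem: simples to flats} into a sum over lines with no unknown multiplicities. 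So even as a reduction, the Fomin--Reading route carries strictly fewer unproven inputs than yours. Second, your self-similarity input ($[1,w]\cong NC(W_w)$ with $w$ a Coxeter element of $W_w$) is correct but is an additional nontrivial ingredient you would need to source carefully. If you want to pursue this direction, the concrete open target is identity \eqref{EQ: FR1}; a degree-sensitive refinement of the $W$-Laplacian spectrum (something whose characteristic polynomial knows the $d_i$ and not just $h$) is indeed what is missing.
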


A chain as in the right hand side of \eqref{Eq: Defn Z(NC(W))} will be called \emph{saturated} if $w_i\lneq_{\mathcal{R}}w_{i+1}$ for all $i$. The main combinatorial object of this work is the collection of \emph{maximal saturated chains in $NC(W)$}, whose size we call the {\color{blue} \emph{chain number of $W$}} and denote by $MC(W)$:
\begin{equation}
MC(W):=\#\{1\lneq_{\mathcal{R}}w_1\lneq_{\mathcal{R}}\cdots\lneq_{\mathcal{R}}w_{n-1}\lneq_{\mathcal{R}}c\}.\label{Eq: MC(W) as max chains}
\end{equation}Notice that all these chains have length $n$, equal to the rank of $W$, since $\ell_{\mathcal{R}}(c)=n$. It is a standard fact in the theory of posets \cite[Prop.~3.12.1]{EC1} that the leading term of the Zeta polynomial encodes the number of maximal, saturated chains in the poset. Thus we have the following corollary (also recorded in \cite[Prop.~9]{chapoton}).

\begin{theorem}\label{Prop. MC(W)=h^nn!/|W|}
For a reflection group $W$ as in Thm.~\ref{Prop: Chapoton's formula} the chain number of $W$ is given as $$MC(W)=\dfrac{h^nn!}{|W|}.$$
\end{theorem}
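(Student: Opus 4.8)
The plan is to derive Theorem~\ref{Prop. MC(W)=h^nn!/|W|} directly from Theorem~\ref{Prop: Chapoton's formula} using the standard poset-theoretic fact that the number of maximal saturated chains is $n!$ times the leading coefficient of the Zeta polynomial. Concretely, $MC(W)$ is the number of chains $1 \lneq_{\mathcal{R}} w_1 \lneq_{\mathcal{R}} \cdots \lneq_{\mathcal{R}} w_{n-1} \lneq_{\mathcal{R}} c$, all of which have length exactly $n$ since $\ell_{\mathcal{R}}(c) = n$. By \cite[Prop.~3.12.1]{EC1}, if the Zeta polynomial $\mathcal{Z}(NC(W),k)$ of a graded poset of rank $n$ is a polynomial in $k$ of degree $n$, then the number of maximal saturated chains equals $n!$ times its leading coefficient (the coefficient of $k^n$).

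First I would read off the leading coefficient from Chapoton's formula. Since
\[
\mathcal{Z}(NC(W),k) = \prod_{i=1}^n \frac{kh + d_i}{d_i},
\]
this is a polynomial of degree $n$ in $k$, and its leading coefficient is obtained by taking the top-degree term of each factor, namely $kh/d_i$. Thus the coefficient of $k^n$ is
\[
\prod_{i=1}^n \frac{h}{d_i} = \frac{h^n}{\prod_{i=1}^n d_i}.
\]
Second, I would invoke the classical invariant-theoretic identity $\prod_{i=1}^n d_i = |W|$, which states that the product of the fundamental degrees equals the order of the reflection group (this is a basic fact from the invariant theory recalled just before Thm.~\ref{Prop: Chapoton's formula}, though it is not stated explicitly in the excerpt). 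Combining these gives the leading coefficient $h^n/|W|$.

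Finally, multiplying by $n!$ as dictated by the poset fact yields
\[
MC(W) = n! \cdot \frac{h^n}{|W|} = \frac{h^n n!}{|W|},
\]
which is exactly the claimed formula. The argument is essentially a one-line extraction once Chapoton's formula and the standard machinery are in place, so I do not anticipate any genuine obstacle. The only point requiring mild care is confirming that the hypotheses of \cite[Prop.~3.12.1]{EC1} apply: one must know that $NC(W)$ is graded of rank $n$ (so that all maximal chains have the same length $n$, which follows from $\ell_{\mathcal{R}}$ being the rank function and $\ell_{\mathcal{R}}(c)=n$), and that $\mathcal{Z}(NC(W),k)$ is genuinely a degree-$n$ polynomial in $k$ whose leading coefficient governs the top chain count. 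Both are immediate from the preceding setup, so the proof is short.
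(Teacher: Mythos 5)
Your proposal is correct and is essentially identical to the paper's own derivation of this statement: the paper presents it as an immediate corollary of Chapoton's formula (Thm.~\ref{Prop: Chapoton's formula}), citing \cite[Prop.~3.12.1]{EC1} for the fact that $n!$ times the leading coefficient of the Zeta polynomial counts maximal saturated chains, together with the classical identity $\prod_{i=1}^n d_i=|W|$. Be aware, though, that this derivation inherits the case-by-case nature of the known proofs of Chapoton's formula, which is precisely why the paper gives a second, independent, case-free proof of the same statement in Section~\ref{Sec: The proof} via the Deligne-Reading recursion and the $W$-Laplacian.
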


It is easy (and we will need it in Section~\ref{Sec: The proof}) to obtain a version of Thm.~\ref{Prop. MC(W)=h^nn!/|W|} when $W$ is reducible. Assume in that case that we can write $W=W_1\times W_2\times\cdots\times W_s$, where the $W_i$ are irreducible real reflection groups with Coxeter numbers $h_i$ and ranks $r_i$ such that $r_1+\cdots+r_s=n$. Then, the maximal chains in $NC(W)$ are shuffles of maximal chains of the lattices $NC(W_i)$ and we have $$MC(W)=\binom{n}{r_1,\ldots,r_s}\cdot MC(W_1)\cdots MC(W_s).$$
This immediately implies the formula $\displaystyle MC(W)=\dfrac{n!}{|W|}\cdot \prod_{i=1}^sh_i^{r_i}$. This is of course a complete answer but, to avoid always keeping track of the various components and their ranks, we will introduce a new notation. For $W$ as before, we define the {\color{blue} \emph{multiset of Coxeter numbers of $W$}} as the collection
\begin{equation}\{h_i(W)\}_{i=1}^n:=\{\underbrace{h_1,\dots,h_1}_{r_1\text{-times}},\dots,\underbrace{h_s,\dots,h_s}_{r_s\text{-times} }  \}.\label{Eq: Defn of multi Cox}\end{equation}
For instance, we have $\{h_i(A_3)\}=\{4,4,4\}$ and $\{h_i(A_2\times B_3\times F_4)\}=\{3,3,6,6,6,12,12,12,12\}$. We will see in Corol.~\ref{Corol: det(L_W)=prod h_i(W)} that the multiset of Coxeter numbers of $W$ is in fact the collection (with multiplicity) of eigenvalues of the $W$-Laplacian matrix $L_W$. With this new notation, we have shown the following.

\begin{corollary}\label{Cor. irr to red}
For a possibly reducible real reflection group $W$ of rank $n$ and with multiset of Coxeter numbers $\big\{h_i(W)\big\}_{i=1}^n$, the chain number $MC(W)$ of \eqref{Eq: MC(W) as max chains} is given as $$MC(W)=\dfrac{n!\cdot\prod_{i=1}^nh_i(W)}{|W|}.$$
\end{corollary}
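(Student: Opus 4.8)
The plan is to bootstrap from the irreducible case, already established in Thm.~\ref{Prop. MC(W)=h^nn!/|W|}, to the general (possibly reducible) case by exploiting the product structure of the noncrossing lattice. First I would decompose $W=W_1\times\cdots\times W_s$ into irreducible reflection groups, with $W_i$ of rank $r_i$ and Coxeter number $h_i$, so that $r_1+\cdots+r_s=n$ and $|W|=\prod_i|W_i|$. The key structural input is that the absolute order respects this direct product: since the reflection set $\mathcal{R}$ of $W$ is the disjoint union of the reflection sets of the factors, reflection length is additive across factors, and any Coxeter element of $W$ factors as $c=c_1\cdots c_s$ with $c_i$ a Coxeter element of $W_i$. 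This yields a poset isomorphism $NC(W)\cong NC(W_1)\times\cdots\times NC(W_s)$, under which a maximal saturated chain of $NC(W)$ corresponds to a choice of a maximal saturated chain in each $NC(W_i)$ (of length $r_i$) together with an interleaving of these $s$ chains into a single chain of length $n$. Counting the interleavings produces the multinomial factor, so that
\[
MC(W)=\binom{n}{r_1,\ldots,r_s}\cdot\prod_{i=1}^s MC(W_i).
\]

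Next I would substitute the irreducible formula $MC(W_i)=h_i^{r_i}r_i!/|W_i|$ from Thm.~\ref{Prop. MC(W)=h^nn!/|W|} and expand the multinomial coefficient as $n!/(r_1!\cdots r_s!)$. The factors $r_i!$ then cancel against those coming from the $MC(W_i)$, leaving
\[
MC(W)=\frac{n!}{|W_1|\cdots|W_s|}\cdot\prod_{i=1}^s h_i^{r_i}=\frac{n!}{|W|}\cdot\prod_{i=1}^s h_i^{r_i}.
\]
Finally, by the definition \eqref{Eq: Defn of multi Cox} of the multiset of Coxeter numbers, in which each $h_i$ occurs with multiplicity $r_i$, we have $\prod_{i=1}^s h_i^{r_i}=\prod_{i=1}^n h_i(W)$, and the claimed formula follows.

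The only nontrivial step is the product decomposition of $NC(W)$ and the resulting shuffle count; everything else is bookkeeping with factorials and the multiset notation. I therefore expect the shuffle argument to be the main (if mild) obstacle, since it rests on the isomorphism $NC(W)\cong\prod_i NC(W_i)$ — equivalently, on reflection length being additive over factors and on a Coxeter element of $W$ splitting as a product of Coxeter elements of the factors. Both facts are standard consequences of $\mathcal{R}$ being the disjoint union of the reflection sets of the $W_i$, so no genuine difficulty arises.
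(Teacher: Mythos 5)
Your proposal is correct and follows exactly the paper's own route: the decomposition $NC(W)\cong\prod_i NC(W_i)$, the shuffle count giving $MC(W)=\binom{n}{r_1,\ldots,r_s}\prod_i MC(W_i)$, and substitution of the irreducible formula from Thm.~\ref{Prop. MC(W)=h^nn!/|W|}. The extra justification you give for why the absolute order and Coxeter elements respect the direct product is a welcome elaboration of a step the paper treats as evident, but it is not a different argument.
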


Chapoton's formula in Thm.~\ref{Prop: Chapoton's formula} is uniformly stated but its proof relies on the classification of real reflection groups. Until very recently (\cite{Douvr}, and previously  \cite{Michel} for Weyl groups, but see Remark~\ref{Rem: Advantages of our proof}), its corollary in Thm.~\ref{Prop. MC(W)=h^nn!/|W|} could only be derived after case-by-case arguments as well. In Section~\ref{Sec: The proof} we give a proof of Thm.~\ref{Prop. MC(W)=h^nn!/|W|} that is case-free and relies only on Coxeter-theoretic considerations.

\section{The $W$-Laplacian and a parabolic recursion on Coxeter numbers}
\label{Sec: W-Laplacian}

In our previous work \cite[\S~3.4]{chapuy_douvr} we introduced for any (complex) reflection group $W$ an $(n\times n)$ matrix, called the $W$-Laplacian, which generalized the usual, weighted Laplacian matrix $L(K_n,\bm\omega)$ of the complete graph $K_n$. The $W$-Laplacian encodes via its spectrum a variety of structural and combinatorial results; the weighted enumeration of (arbitrary length) reflection factorizations of Coxeter elements [ibid, Thm.~1], the volume calculation of root zonotopes [ibid, Thm.~8.11], and a generalization of the Matrix-forest theorem for reflection groups [ibid, Thm.~3]. An important ingredient of this theory was a parabolic recursion for the characteristic polynomial of some more general $\mathcal{A}$-Laplacian matrices (for hyperplane arrangements $\mathcal{A}$) that was analogous to Brieskorn's lemma \cite{briesk_lemma} (or for a textbook approach, see \cite[Corol.~3.9]{dimca}). 

In this section, we review the definition of the $W$-Laplacian matrix, and reproduce this parabolic recursion in Lemma~\ref{Lem: parab. rec. det(L_W+t)}; in fact with a more direct proof. This recursion is the main ingredient through which we give a uniform derivation of the formula for the chain numbers $MC(W)$ in Theorem~\ref{Thm intro chapoton}; we do this by compairing it with the Deligne-Reading recursion \eqref{eq intro DR recursion}. Even though we only deal with the unweighted case and with reflection arrangements here, the proof of the technical Lemma~\ref{Lem: parab. rec. det(L_W+t)} applies essentially verbatim to the same generality as in \cite[Prop.~8.3]{chapuy_douvr}. 

\begin{definition}[The $W$-Laplacian]\label{Defn: the W-Laplacian}
Let $W$ be a real reflection group of rank $n$ acting on the space $V\cong \mathbb{R}^n$ with reflection representation $\rho^{}_V$. We denote its set of reflections by $\mathcal{R}$ and positive root system by $\Phi^{+}$; we treat $\Phi^{+}$ as a subset of $V$ and we normalize all roots $\sigma\in\Phi^{+}$ so that $\langle \sigma,\sigma\rangle=2$ for the standard $W$-invariant inner product $\langle\cdot,\cdot\rangle$. Then, we define the {\color{blue}\emph{$W$-Laplacian matrix} $L_W$} as
$$\op{GL}(V)\ni L_W:=\sum_{\tau\in\mathcal{R}}\big(\mathbf{I}_n-\rho_V^{}(\tau)\big)\quad\text{or equivalently}\quad L_W(v):=\sum_{\sigma\in\Phi^{+}}\langle \sigma, v\rangle\cdot\sigma,$$
where $\mathbf{I}_n$ denotes the $(n\times n)$ identity matrix. The equivalence of the two definitions is clear as we have $\big(\rho_V^{}(\tau)\big)(v)=v-\langle\sigma,v\rangle\cdot \sigma$ when $\sigma$ is any of the two roots that correspond to the reflection $\tau$.  
\end{definition}

\begin{remark}
It is not too hard to see that for the symmetric group $S_n$ acting on $\mathbb{R}^n$, the $W$-Laplacian agrees with the usual graph Laplacian $L(K_n)$ of the complete graph $K_n$. We display, with contracted notation, the case $n=4$ below.
$$\underbrace{\begin{bmatrix}
\ \ 3& -1 & -1 & -1\\
-1 &\ \  3 & -1 & -1\\
-1 & -1 &\ \  3 & -1\\
-1 & -1 & -1 &\ \  3 \end{bmatrix}}_{L(K_4)} = \sum_{i<j}\begin{bmatrix}
0 &\ \ 0 & \ \ 0 & 0 \\
0 &\ \ 1 & -1 & 0\\
0 & -1 & \ \ 1 & 0\\
0 & \ \ 0 &\ \ 0 & 0\end{bmatrix}=\sum_{i<j} \cdot \Bigg(\mathbf{I}_4- \underbrace{\begin{bmatrix}
1&0&0&0\\
0&0&1&0\\
0&1&0&0\\
0&0&0&1\end{bmatrix}}_{\rho^{}_V\big((ij)\big)}\Bigg).
$$
Moreover, the other common presentation of the graph Laplacian as $L(G)=M\cdot M^T$, where $M$ is the \emph{oriented} incidence matrix of the graph $G$, has again a natural analog here. If $(e_i)_{i=1}^n$ is an orthonormal basis for $\mathbb{R}^n$ and $R$ denotes the $(n\times N)$ matrix $R_{ij}:=\langle e_i,\sigma_j\rangle$ where $\sigma_j\in\Phi^{+}$, it is immediate from the second definition above that the matrix of $L_W$ in the $(e_i)$-basis equals $R\cdot R^T$.
\end{remark}

We will see now the first and most fundamental connection between the $W$-Laplacian $L_W$ and the reflection group $W$; the spectrum of $L_W$ is given via the Coxeter numbers. This observation has deep connections to representation theory initiated perhaps by Beynon-Lustig, generalized by Gordon-Griffeth, and pursued by many others \cite{BL,Malle,GG}. The first appearance of the following statement however seems to go back to Bourbaki \cite[Ch.~5:~\S~6]{bourbaki}. Their beautiful argument directly relates it to the formula \eqref{Eq: 2N=hn} so we reproduce it here (see \cite[Prop.~3.13]{chapuy_douvr} for the complex case).

\begin{proposition}\label{Prop: L_W is mult by h}
For an irreducible real reflection group $W$ with Coxeter number $h$, the $W$-Laplacian $L_W$ is the matrix of scalar multiplication by $h$.
\end{proposition}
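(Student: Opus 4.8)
The plan is to exploit the irreducibility of $W$ together with the $W$-equivariance of $L_W$, reducing the statement to a single trace computation. Concretely, I would first show that $L_W$ commutes with the action of $W$ on $V$; since the reflection representation of an irreducible $W$ admits no nontrivial proper invariant subspace, this forces $L_W$ to be a scalar matrix; and the scalar is then pinned down by comparing traces, where the formula \eqref{Eq: 2N=hn} enters in exactly the form Bourbaki uses.

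For the equivariance, I would conjugate: for $w\in W$,
$$\rho_V^{}(w)\,L_W\,\rho_V^{}(w)^{-1}=\sum_{\tau\in\mathcal{R}}\big(\mathbf{I}_n-\rho_V^{}(w\tau w^{-1})\big).$$
Because the set of all reflections $\mathcal{R}$ is closed under conjugation, the map $\tau\mapsto w\tau w^{-1}$ merely permutes the summands, so the right-hand side equals $L_W$. Hence $L_W$ commutes with every $\rho_V^{}(w)$.

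Next I would promote equivariance to scalarity. From the description $L_W=R\cdot R^T$ in the preceding remark, $L_W$ is a real symmetric (in fact positive semidefinite) matrix, so it is diagonalizable over $\mathbb{R}$ and $V$ decomposes as an orthogonal direct sum of its eigenspaces. Each eigenspace is $W$-invariant, because $L_W$ commutes with every $\rho_V^{}(w)$; by irreducibility each is therefore either $0$ or all of $V$. As the roots span $V$, the matrix $L_W$ is nonzero, so there is exactly one eigenspace and it is all of $V$: that is, $L_W=\lambda\,\mathbf{I}_n$ for a single scalar $\lambda$. This is the one point requiring minor care, and I would route it through the symmetry of $L_W$ as above rather than through Schur's lemma, so that only the real irreducibility in the paper's definition is used and no appeal to absolute irreducibility is needed.

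Finally I would identify $\lambda$ by taking traces. Each reflection $\tau$ has eigenvalues $-1$ on its root line and $1$ on its mirror, so $\operatorname{tr}\big(\mathbf{I}_n-\rho_V^{}(\tau)\big)=n-(n-2)=2$; equivalently, from $L_W(v)=\sum_{\sigma\in\Phi^{+}}\langle\sigma,v\rangle\,\sigma$ and the normalization $\langle\sigma,\sigma\rangle=2$ one reads off $\operatorname{tr}(L_W)=\sum_{\sigma\in\Phi^{+}}\langle\sigma,\sigma\rangle=2N$. Comparing with $\operatorname{tr}(\lambda\,\mathbf{I}_n)=n\lambda$ gives $n\lambda=2N$, and the Coxeter identity $2N=hn$ of \eqref{Eq: 2N=hn} yields $\lambda=h$. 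There is no serious obstacle in this argument: once equivariance and irreducibility are in place the conclusion is immediate, and the only external input is the relation $2N=hn$.
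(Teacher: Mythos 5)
Your proposal is correct and follows essentially the same route as the paper, which likewise reproduces Bourbaki's argument: $W$-equivariance of $L_W$ plus irreducibility forces $L_W$ to be scalar, and the scalar is identified as $2N/n=h$ via the trace computation $\sum_{\sigma\in\Phi^{+}}\langle\sigma,\sigma\rangle=2N$ together with \eqref{Eq: 2N=hn}. The one small divergence is that you deduce scalarity from the symmetry of $L_W$ and the spectral theorem rather than from Schur's lemma as the paper does; this is a legitimate (if minor) refinement, since Schur's lemma over $\mathbb{R}$ only yields a division-algebra commutant unless one also invokes the absolute irreducibility of the reflection representation, a point the paper passes over silently.
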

\begin{proof}
After rewriting the second version of Defn.~\ref{Defn: the W-Laplacian} as $\displaystyle L_W(v):=\dfrac{1}{2}\sum_{\sigma\in \Phi}\langle\sigma,v\rangle\cdot\sigma$, where now the sum is over all roots, it is immediately clear that $L_W$ is a $W$-equivariant map on $V$. Since moreover $W$ is assumed irreducible, we know by Schur's lemma that $L_W$ must act on $V$ as multiplication by some scalar $\beta$. That is, we will have $$\sum_{\sigma\in\Phi^{+}}\langle\sigma,v\rangle\cdot\sigma=\beta\cdot v.$$ Then, if $n$ is the rank of $W$ and $(e_i)_{i=1}^n$ an orthonormal system of coordinates for $V$, we can write $$\beta\cdot n=\beta\cdot\sum_{i=1}^n\langle e_i,e_i\rangle= \sum_{i=1}^n\langle \beta\cdot e_i,e_i\rangle=\sum_{i=1}^n\sum_{\sigma\in\Phi^{+}}\langle \sigma, e_i\rangle^2=\sum_{\sigma\in \Phi^{+}}\langle \sigma,\sigma\rangle=2\cdot |\Phi^{+}|.$$ That is, we have $\beta=2N/n$ which is equal to $h$ after \eqref{Eq: 2N=hn}.
\end{proof}

\begin{corollary}\label{Corol: det(L_W)=prod h_i(W)}
For a possibly reducible real reflection group $W$, the determinant of the $W$-Laplacian is given as $$\op{det}(L_W)=\prod_{i=1}^n h_i(W),$$ where $\{h_i(W)\}$ is the \emph{multiset} of Coxeter numbers of $W$ as defined in \eqref{Eq: Defn of multi Cox}.
\end{corollary}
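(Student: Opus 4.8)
The plan is to reduce everything to the irreducible case settled in Proposition~\ref{Prop: L_W is mult by h}, and then exploit the multiplicativity of the determinant under the orthogonal decomposition of $V$ induced by the factorization of $W$ into irreducibles.

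First I would dispose of the irreducible case. If $W$ is irreducible of rank $n$ with Coxeter number $h$, then Proposition~\ref{Prop: L_W is mult by h} tells us that $L_W=h\cdot\mathbf{I}_n$, whence $\det(L_W)=h^n$. On the other hand, by the definition \eqref{Eq: Defn of multi Cox}, the multiset of Coxeter numbers of an irreducible $W$ is simply $\{h,\dots,h\}$ with $n$ entries, so $\prod_{i=1}^n h_i(W)=h^n$, and the two sides agree.

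Next I would treat a general $W=W_1\times\cdots\times W_s$ with each $W_j$ irreducible of rank $r_j$ and Coxeter number $h_j$. The reflection representation splits as an orthogonal sum $V=V_1\perp\cdots\perp V_s$, where $V_j$ carries the reflection representation of $W_j$, and correspondingly the positive roots partition as $\Phi^{+}=\bigsqcup_{j=1}^s\Phi^{+}_j$ with $\Phi^{+}_j\subset V_j$. The key observation is that the second formula for $L_W$ in Definition~\ref{Defn: the W-Laplacian}, namely $L_W(v)=\sum_{\sigma\in\Phi^{+}}\langle\sigma,v\rangle\,\sigma$, respects this decomposition: for a root $\sigma\in\Phi^{+}_j$ the pairing $\langle\sigma,v\rangle$ depends only on the $V_j$-component of $v$ (the $V_j$ being mutually orthogonal), while the output vector $\sigma$ itself lies in $V_j$. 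Hence $L_W$ is block diagonal with blocks $L_{W_j}$ acting on $V_j$, that is, $L_W=\bigoplus_{j=1}^s L_{W_j}$.

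Finally, multiplicativity of the determinant across this block decomposition, combined with the irreducible case, yields
$$\det(L_W)=\prod_{j=1}^s\det(L_{W_j})=\prod_{j=1}^s h_j^{r_j}=\prod_{i=1}^n h_i(W),$$
the last equality being exactly the definition \eqref{Eq: Defn of multi Cox} of the multiset of Coxeter numbers. The only point needing a moment's care is the verification of the block-diagonal structure of $L_W$, but this is immediate from the orthogonality of the constituent root subsystems, so I expect no genuine obstacle here.
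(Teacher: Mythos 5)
Your proposal is correct and follows essentially the same route as the paper: decompose $W$ into irreducibles, observe that $L_W=\bigoplus_{j} L_{W_j}$ (the paper verifies this via the first formula in Definition~\ref{Defn: the W-Laplacian} using the fact that reflections live in single components, while you use the root-system formula --- these are equivalent), and then combine multiplicativity of the determinant with Proposition~\ref{Prop: L_W is mult by h}. No gaps.
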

\begin{proof}
Suppose that $W$ decomposes into irreducibles as $W=W_1\times\cdots\times W_s$. Then, the set of reflections of $W$ is made up of the elements $(1,\cdots,\tau,\cdots,1)$ where $\tau$ is a reflection of some irreducible component $W_i$. This implies, after Defn.~\ref{Defn: the W-Laplacian}, that we can write the $W$-Laplacian as $$L_W=\bigoplus_{i=1}^s L_{W_i},$$ where $\bigoplus$ denotes the direct sum of matrices. Now the required statement is a direct corollary of Prop.~\ref{Prop: L_W is mult by h} and the definition of the multiset of Coxeter numbers as in \eqref{Eq: Defn of multi Cox}.
\end{proof}

The following statement is the key technical lemma of the paper. Its proof relies on the fact that the $W$-Laplacian is a sum of rank 1 operators (as in the second part Defn.~\ref{Defn: the W-Laplacian}). Burman et al. \cite{burman_mt} studied such and wider classes of operators giving an abstract matrix tree theorem for their characteristic polynomials and in fact Lemma~\ref{Lem: parab. rec. det(L_W+t)} can be deduced from theirs [ibid, Corol.~2.4] without too much extra work. In \cite[Lemma~8.2\,\&\,Prop.~8.3]{chapuy_douvr} we gave a proof of this statement using the Cauchy-Binet from linear algebra; we present here (also for completeness) a more direct approach.

\begin{lemma}\label{Lem: parab. rec. det(L_W+t)}
For an irreducible real reflection group $W$ with reflection arrangement $\mathcal{A}$, the characteristic polynomial of its $W$-Laplacian satisfies the following recursion.
$$\op{det}(t\cdot \mathbf{I}_n+L_W)=\sum_{X\in\mathcal{L}_{\mathcal{A}}}\op{pdet}(L_{W_X})\cdot t^{\op{dim}(X)},$$ where the \emph{pseudodeterminant} $\op{pdet}(A)$ is defined as the smallest degree nonzero coefficient of the characteristic polynomial $\op{det}(t\cdot \mathbf{I}+A)$ of the matrix $A$.
\end{lemma}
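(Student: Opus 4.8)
The plan is to expand the characteristic polynomial of $L_W$ into a sum over subsets of positive roots weighted by squared volumes, and then to reorganize that sum according to the flat each subset spans. Recall from Definition~\ref{Defn: the W-Laplacian} that, in an orthonormal basis, $L_W=R R^{T}$ where $R$ is the $(n\times N)$ matrix whose columns are the positive roots $\sigma\in\Phi^{+}$. The central identity I would establish is
\begin{equation}
\op{det}(t\cdot\mathbf{I}_n+L_W)=\sum_{B\subseteq\Phi^{+}}\op{Gram}(B)\cdot t^{\,n-|B|},\label{eq:my-star}
\end{equation}
where $\op{Gram}(B)$ denotes the determinant of the Gram matrix of the roots in $B$; this vanishes unless $B$ is linearly independent, in which case it is the squared volume of the parallelepiped they span. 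Note that \eqref{eq:my-star} is stated for an arbitrary family of vectors, so it applies verbatim to every parabolic $W_X$, reducible or not.

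To prove \eqref{eq:my-star} in a direct way (rather than re-running Cauchy--Binet) I would invoke the Weinstein--Aronszajn identity $\op{det}(\lambda\mathbf{I}_n+AB)=\lambda^{\,n-m}\op{det}(\lambda\mathbf{I}_m+BA)$, valid as an identity of polynomials, with $A=R$ and $B=R^{T}$. This gives $\op{det}(t\cdot\mathbf{I}_n+R R^{T})=t^{\,n-N}\op{det}(t\cdot\mathbf{I}_N+R^{T}R)$, and the coefficient of $t^{\,N-k}$ in the characteristic polynomial of the $(N\times N)$ Gram matrix $R^{T}R$ is, by the standard interpretation of the elementary symmetric functions of the eigenvalues, the sum of its principal $k\times k$ minors. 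Since the principal minor indexed by $B$ is exactly $\op{Gram}(B)$, collecting powers of $t$ yields \eqref{eq:my-star} at once.

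Next I would partition the index set of \eqref{eq:my-star} by flats. A linearly independent subset $B\subseteq\Phi^{+}$ determines the flat $X_B:=\bigcap_{\sigma\in B}\sigma^{\perp}=(\op{span}B)^{\perp}\in\mathcal{L}_{\mathcal{A}}$, of dimension $n-|B|$; conversely, the subsets mapping to a fixed flat $X$ are precisely the bases of $X^{\perp}$ drawn from roots, and by Steinberg's theorem (Section~\ref{Sec: prelim}) these roots are exactly $\Phi^{+}\cap X^{\perp}=\Phi^{+}_{W_X}$, which span $X^{\perp}$. Grouping \eqref{eq:my-star} accordingly, with $|B|=\op{codim}(X)$, produces
\begin{equation}
\op{det}(t\cdot\mathbf{I}_n+L_W)=\sum_{X\in\mathcal{L}_{\mathcal{A}}}t^{\op{dim}(X)}\!\!\!\sum_{\substack{B\subseteq\Phi^{+}\cap X^{\perp}\\ B\ \text{basis of}\ X^{\perp}}}\!\!\!\op{Gram}(B).\label{eq:my-grouped}
\end{equation}
Applying the expansion \eqref{eq:my-star} to $W_X$ in place of $W$ now identifies the inner sum as a pseudodeterminant: since $L_{W_X}$ has rank $\op{codim}(X)$ and kernel exactly $X$, its characteristic polynomial is $t^{\op{dim}(X)}\prod_j(t+\mu_j)$ over the nonzero eigenvalues $\mu_j$, so its lowest nonzero coefficient $\op{pdet}(L_{W_X})$ is precisely the coefficient of $t^{\op{dim}(X)}$ in \eqref{eq:my-star} for $W_X$, namely the inner sum $\sum_B\op{Gram}(B)$ over root-bases of $X^{\perp}$. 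Substituting into \eqref{eq:my-grouped} gives the claim.

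The hard part will not be any single step but the bookkeeping: one must verify that $B\mapsto X_B$ is genuinely a bijection onto (flat, root-basis) pairs, that linearly dependent subsets contribute nothing, and that the identification $\Phi^{+}\cap X^{\perp}=\Phi^{+}_{W_X}$ together with the spanning property holds in the generality of arbitrary (possibly reducible) parabolics. The only genuinely delicate point is the degenerate linear algebra of the rank-deficient operators $L_{W_X}$ when extracting $\op{pdet}$, and making sure the ``direct'' derivation of \eqref{eq:my-star} stands on its own rather than silently reintroducing Cauchy--Binet.
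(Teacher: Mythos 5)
Your proposal is correct; every step checks out, including the only delicate points you flag yourself (only independent subsets contribute, $\Phi^{+}\cap X^{\perp}=\Phi^{+}_{W_X}$ spans $X^{\perp}$ because a flat is an intersection of hyperplanes of $\mathcal{A}$, and the inner sum is the coefficient of $t^{\dim X}$, which is indeed the \emph{lowest} nonzero one since $L_{W_X}$ has rank $\op{codim}(X)$). The combinatorial heart of your argument --- expanding the coefficients of $\op{det}(t\cdot\mathbf{I}_n+L_W)$ as a sum over $k$-subsets of positive roots, discarding the dependent ones, grouping the independent ones by the flat $(\op{span}B)^{\perp}$ they cut out, and recognizing each group as $\op{pdet}(L_{W_X})$ --- is exactly the paper's. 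Where you diverge is the linear-algebra engine used to produce that expansion: the paper writes $c_{n-k}=\op{Tr}\bigl(\bigwedge^k(L_W)\bigr)$ and expands $\bigwedge^k(L_W)(v_1\wedge\cdots\wedge v_k)$ over $k$-tuples of roots, so the grouping happens at the level of operators ($\bigwedge^k(L_W)=\sum_X\bigwedge^k(L_{W_X})$) before any trace is taken; you instead pass through $\op{det}(t\cdot\mathbf{I}_n+RR^{T})$ versus $\op{det}(t\cdot\mathbf{I}_N+R^{T}R)$ and read off coefficients as sums of principal minors $\op{Gram}(B)$ of the Gram matrix. Two remarks on that. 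First, despite your stated intention not to ``re-run Cauchy--Binet,'' the identity \eqref{eq:my-star} is precisely what the Cauchy--Binet computation of \cite[Lemma~8.2]{chapuy_douvr} produces, so your route is essentially the authors' \emph{earlier} proof (which they explicitly set aside here in favor of the exterior-power one); the Weinstein--Aronszajn step is just a repackaging. Second, a cosmetic point: with $N>n$ the prefactor $t^{\,n-N}$ is a negative power, so you should state the identity as $t^{\,N-n}\op{det}(t\cdot\mathbf{I}_n+RR^{T})=\op{det}(t\cdot\mathbf{I}_N+R^{T}R)$ (legitimate because $R^{T}R$ has rank $n$, so $t^{\,N-n}$ divides its characteristic polynomial). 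What each approach buys: yours stays entirely inside ordinary determinant calculus and makes the positivity of each term ($\op{Gram}(B)>0$ for independent $B$) manifest; the paper's avoids rectangular matrices and gives the finer operator-level identity $\bigwedge^k(L_W)=\sum_{X}\bigwedge^k(L_{W_X})$, of which the coefficient identity is the trace.
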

\begin{proof}
Recall that given a matrix $M\in\op{GL}(V)$, its $k$-th exterior power $\bigwedge^k(M)$ is defined as the matrix which acts on $\bigwedge^k(V)$ via (the linear extension of) $$\bigwedge^k(M)(v_1\wedge\cdots\wedge v_k)=M(v_1)\wedge\cdots\wedge M(v_k),$$ for any elements $v_i\in V$. Then, it is a standard fact from linear algebra that the coefficients of the characteristic polynomial of $M$ are given via the traces of these exterior powers (up to an alternating sign). In our case of the $W$-Laplacian if we expand its characteristic polynomial (evaluated at $-t$ to avoid the alternating signs) into the expression
$$\op{det}(t\cdot \mathbf{I}_n+L_W)=t^m+c_{m-1}t^{m-1}+\cdots+c_1\cdot t+c_0,$$ then the coefficients are given as $c_{n-k}=\op{Tr}\big(\bigwedge^k(L_W)\big)$.

Now, the $k$-th exterior power of the $W$-Laplacian acts on an element $v_1\wedge \cdots \wedge v_k$ of the ambient space $V$ as follows:
\begin{align*}
\bigwedge^k(L_W)(v_1\wedge\cdots\wedge v_k)&=L_W(v_1)\wedge\cdots \wedge L_W(v_k)=\Big(\sum_{\sigma\in\Phi^{+}}\langle\sigma,v_1\rangle\cdot\sigma\Big)\wedge\cdots\wedge \Big(\sum_{\sigma\in\Phi^{+}}\langle\sigma,v_k\rangle\cdot \sigma\Big)\\
&=\sum_{(\sigma_{i_1},\sigma_{i_2},\ldots,\sigma_{i_k})\in (\Phi^{+})^k}\langle\sigma_{i_1},v_1\rangle\cdots\langle\sigma_{i_k},v_k\rangle\cdot \sigma_{i_1}\wedge\cdots\wedge\sigma_{i_k},
\end{align*}where the sum is over all $k$-tuples of positive roots. For the tuple $(\sigma_{i_1},\ldots,\sigma_{i_k})$ to make a non-trivial contribution to the above sum, the roots $\sigma_{i_j}$ must be linearly independent. If they are, then they span the space $X^{\perp}$ orthogonal to the flat $X\in\mathcal{L}_{\mathcal{A}}$, of dimension $n-k$, determined as the intersection of their hyperplanes $H_j:=(\sigma_{i_j})^{\perp}$. This further implies that all $\sigma_{i_j}$ belong to the root system $\Phi^{+}_{W_X}$ of the parabolic subgroup $W_X$. If we group together the terms that appear in the sum above, \emph{with respect to this flat $X$}, we have the following.
\begin{align*}\bigwedge^k(L_W)(v_1\wedge\cdots \wedge v_k)&=\sum_{X\in\mathcal{L}_{\mathcal{A}}}\ \sum_{(\sigma_{i_1},\ldots,\sigma_{i_k})\in(\Phi^{+}_{W_X})^k}\langle\sigma_{i_1},v_1\rangle\cdots\langle\sigma_{i_k},v_k\rangle\cdot \sigma_{i_1}\wedge\cdots\wedge\sigma_{i_k}\\
&=\sum_{X\in\mathcal{L}_{\mathcal{A}}}\bigwedge^k(L_{W_X})(v_1\wedge\cdots\wedge v_k), \end{align*}where the second equality is by the same argument as previously. Now, by construction (see Defn.~\ref{Defn: the W-Laplacian}), the Laplacian $L_{W_X}$ is an $(n\times n)$ matrix of rank $k$. Therefore, the first nonzero coefficient of its characteristic polynomial appears in degree $k$ and, as in the beginning, it is equal to the trace of $\bigwedge^k(L_{W_X})$. The proof is now complete.
\end{proof}

The following theorem is now an immediate corollary of this parabolic recursion of Lemma~\ref{Lem: parab. rec. det(L_W+t)}. It also holds for arbitrary complex reflection groups as we have shown in \cite[Thm.~8.8]{chapuy_douvr}. Recall the definition of the \emph{multiset of Coxeter numbers} $\{h_i(W)\},\, i=1,\ldots,n$ from \eqref{Eq: Defn of multi Cox}.

\hrecursion
\begin{proof}
Indeed, this is a direct combination of Lemma~\ref{Lem: parab. rec. det(L_W+t)} and Corol.~\ref{Corol: det(L_W)=prod h_i(W)}. The appearance of the pseudodeterminant in the Lemma but not the Corollary should not be confusing, it is because we have silently assumed in Corol.~\ref{Corol: det(L_W)=prod h_i(W)} that $W$ is essential, that is, it does not fix (pointwise) a proper linear subspace. The parabolic subgroups $W_X$ are of course not essential but they do act as such on the orthogonal complements $X^{\perp}$. Their Laplacians $L_{W_X}$ differ by those of the essentializations only by a direct sum with a $k\times k$ zero matrix (where $k=\op{dim}(X)$), hence we have that the product of the (multiset) Coxeter numbers $h_i(W_X)$ equals the \emph{pseudodeterminant} of $L_{W_X}$. 
\end{proof}

\newcommand{\myPic}[1]{\begin{minipage}[b]{4mm}\begin{center}{\includegraphics[scale=0.4]{#1}}\end{center}\end{minipage}}
\begin{example}
We give here, as an example, the statement of Thm.~\ref{Thm: rec cox nums} for the symmetric group $S_4$. In this case, the intersection lattice $\mathcal{L}_{\mathcal{A}}$ is the lattice of set partitions of $[4]:=\{1,2,3,4\}$ and the parabolic subgroups $W_X$ are just the corresponding Young subgroups. 

There are $4$ partitions conjugate to $123|4$, corresponding to parabolic subgroups isomorphic to $S_3$ with multiset of Coxeter numbers $\{3,3\}$. Similarly, $3$ partitions conjugate to $12|34$ each leading to the multiset $\{2,2\}$. Then, $6$ partitions of the form $12|3|4$ corresponding to the six transpositions of $S_4$ and with multiset $\{2\}$. Adding to these the whole group $S_4$ with multiset of Coxeter numbers $\{4,4,4\}$ and the trivial subgroup $S_1$ giving the empty set, we have the following equation.
$$(t+4)^3=\underbrace{4^3}_{\myPic{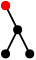}}+\big(\underbrace{4\cdot 3^2}_{\hspace{-6mm}\myPic{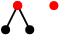}}+ \underbrace{3\cdot(2\cdot 2)}_{\myPic{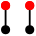}}\big)\cdot t+\underbrace{6\cdot 2}_{\hspace{-4mm}\myPic{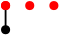}}\cdot\, t^2+\underbrace{1}_{\hspace{-5mm}\myPic{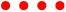}}\cdot\, t^3.$$
Notice that we have indexed the terms of the equation by (isomorphism types of) rooted forests. Indeed, for the symmetric group $S_n$, Theorem~\ref{Thm: rec cox nums} reflects the Matrix-Forest theorem and the summation term associated to a set partition $X$ counts rooted forests whose number of trees and tree sizes are encoded in $X$. For more details see our previous work \cite[\S~8]{chapuy_douvr}.
\end{example}

\section{A simple uniform proof of the chain number formula for $NC(W)$}
\label{Sec: The proof}

In this section we present the main contribution of this work; a case-free proof (Thm.~\ref{Thm: main}) of the Arnold-Bessis-Chapoton formula for the number $MC(W)$ of maximal chains in the noncrossing lattice $NC(W)$ of a real reflection group $W$. We start by briefly reviewing from \cite{reading_chains} the first ingredient of our argument, the Deligne-Reading recursion for $MC(W)$ (stated as Prop.~\ref{Prop: Del-Read rec}). 

It is clear after Defn.~\ref{Defn: NC(W)} that a \emph{maximal} chain in $NC(W)$ corresponds to a \emph{reduced} (i.e. of minimum length, which then has to equal the rank $n$ of $W$) reflection factorization $t_1\cdots t_n=c$ of the Coxeter element $c$. That is, we may rewrite \begin{equation}
MC(W)=\#\{(t_1,\ldots,t_n)\in \mathcal{R}^n\ |\ t_1\cdots t_n=c\},\label{Eq: MC(W) as Red_R(c)}
\end{equation}where $\mathcal{R}$ denotes as usual the set of reflections of $W$.  Now, we may act on the set of such reduced reflection factorizations by conjugating all terms by $c$, since again $ ^ct_1\cdots ^ct_n=c$ if, as usual, we write $ ^ct_i:=c^{-1}\cdot t_i\cdot c$. The orbits of this action have size $h/2$ when $c^{h/2}=-1$ and size $h$ otherwise (because the terms $t_i$ of such a factorization must generate $W$ and the only power of $c$ that can lie in the center of $W$ is $c^{h/2}$). 

Let us assume at this point that $c$ is a bipartite Coxeter element with respect to the simple generators $S$ of $W$ (we can safely do that since $MC(W)$ is independent of the choice of $c$, see Remark~\ref{Rem NC(W) indep of c}). Then, after Prop.~\ref{Prop: action on R by bipart. c}, there are two possible cases for the orbits described above. They either have length $h/2$ and contain a single factorization of the form $t_1'\cdots t_{n-1}'\cdot s_i=c$ for some $s_i\in S$, or they have length $h$ and contain two \emph{different} factorizations  $t_1'\cdots t_{n-1}'\cdot s_i=c$ and $t_1''\cdots t_{n-1}''\cdot s_j=c$, where $s_i$ and $s_j$ are not \emph{necessarily} different however.

Because $c$ is bipartite, all elements $c\cdot s_i$ are either equal or conjugate to some Coxeter element of the maximal parabolic subgroup $W_{\langle s_i\rangle}$. Indeed, it turns out that for any $s_i\in S$ there is always an expression of $c$ as a product of simples that either starts or ends with $s_i$ (this relies on the bipartite structure of $c$ as described in \S~\ref{Sec: Cox elts, NC(W)}). Now, enumerating all factorizations in \eqref{Eq: MC(W) as Red_R(c)} with respect to their $c$-orbit as described in the previous paragraph, we immediately get the following statement.

\begin{proposition}[{The Deligne-Reading recursion \cite[Corol.~3.1]{reading_chains}}]\label{Prop: Del-Read rec}\ \newline 
For an irreducible real reflection group $W$ with set of simple generators $S$ and Coxeter number $h$, the number $MC(W)$ of maximal chains in the noncrossing lattice $NC(W)$ satisfies the recursion
$$MC(W)=\dfrac{h}{2}\cdot \sum_{s\in S}MC(W_{\langle s\rangle}),$$ where $W_{\langle s\rangle}$ is the (standard) parabolic subgroup of $W$ generated by the simple reflections $S\setminus\{s\}$.
\end{proposition}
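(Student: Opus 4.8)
The plan is to prove the recursion by enumerating the reduced reflection factorizations of $c$ according to the orbits of the conjugation action by $c$, and extracting from each orbit those factorizations whose last letter is a simple reflection. First I would set, following \eqref{Eq: MC(W) as Red_R(c)}, $F:=\{(t_1,\ldots,t_n)\in\mathcal{R}^n : t_1\cdots t_n=c\}$, so that $MC(W)=|F|$, and recall from the discussion above that $c$ acts on $F$ by simultaneous conjugation, with every orbit of the same size: $h/2$ when $-1\in W$ (equivalently $c^{h/2}=-1$) and $h$ otherwise, the stabilizer of a factorization being the set of central powers of $c$.

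Next I would isolate the subset $F_S\subseteq F$ of factorizations whose last letter $t_n$ lies in $S$, and show that
\[
|F_S|=\sum_{s\in S}MC(W_{\langle s\rangle}).
\]
For a fixed $s\in S$, a factorization with $t_n=s$ is the same datum as a reduced factorization $t_1\cdots t_{n-1}=cs$. Because $c$ is bipartite, $cs$ is equal or conjugate to a Coxeter element of the maximal parabolic $W_{\langle s\rangle}$; since $cs$ has reflection length $n-1$ and parabolic closure $W_{\langle s\rangle}$, every letter $t_j$ of such a factorization already lies in $W_{\langle s\rangle}$, so its reduced $W$-factorizations coincide with those taken inside $W_{\langle s\rangle}$. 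Hence these are counted by $MC(W_{\langle s\rangle})$ (using invariance under the choice of Coxeter element, Remark~\ref{Rem NC(W) indep of c}), and summing over the disjoint possibilities for the last letter $s\in S$ yields the displayed identity.

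The heart of the argument, and the step I expect to be the main obstacle, is to show that each $c$-orbit $O\subseteq F$ contributes exactly $\tfrac{2}{h}|O|$ elements to $F_S$: a single element when $|O|=h/2$ and two elements when $|O|=h$. Conjugating a factorization conjugates its last letter, so the last letters of the elements of $O$ sweep out a $c$-orbit of reflections, which by Proposition~\ref{Prop: action on R by bipart. c} has either size $h/2$ and meets $S$ once, or size $h$ and meets $S$ twice. I would then match this local reflection-orbit data against the global orbit size of $O$. When $-1\in W$, the central element $c^{h/2}=-1$ centralizes every reflection, so all reflection orbits have size $h/2$ and carry one simple letter; thus $O$ (of size $h/2$) meets $F_S$ exactly once. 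When $-1\notin W$, the orbit $O$ has size $h$, and whether its last letters lie in a reflection orbit of size $h$ (two distinct simple letters, each hit once) or of size $h/2$ (one simple letter, hit twice as the length-$h$ sweep wraps around), $O$ meets $F_S$ exactly twice — this is precisely the dichotomy ``$s_i,s_j$ not necessarily different'' flagged before the statement.

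Finally, summing over orbits gives $|F_S|=\sum_{O}\tfrac{2}{h}|O|=\tfrac{2}{h}|F|$, whence $MC(W)=|F|=\tfrac{h}{2}|F_S|=\tfrac{h}{2}\sum_{s\in S}MC(W_{\langle s\rangle})$, the claimed recursion. The only genuinely delicate points are the parabolic-closure fact underlying the bijection of the second paragraph and the orbit bookkeeping of the third; everything else is formal, which is why the proof can be presented as an immediate consequence of the preparatory discussion.
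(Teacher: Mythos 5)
Your proposal is correct and follows essentially the same route as the paper: the paper's proof is precisely the discussion preceding the Proposition, which partitions the reduced reflection factorizations of $c$ into orbits under simultaneous conjugation by $c$ and uses Proposition~\ref{Prop: action on R by bipart. c} to see that each orbit of size $h/2$ (resp.\ $h$) contains exactly one (resp.\ two) factorizations ending in a simple reflection, each such factorization being counted by $MC(W_{\langle s\rangle})$. Your third paragraph merely spells out the orbit bookkeeping (including the wrap-around case $s_i=s_j$) that the paper leaves implicit in the phrase ``not necessarily different.''
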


\subsection{The main proof} We will now prove Thm.~\ref{Thm: main} by showing that the right hand side of the formula of Thm.~\ref{Prop. MC(W)=h^nn!/|W|} satisfies the same parabolic recursion as Prop.~\ref{Prop: Del-Read rec} above. Again, we note that the novel contribution here is its proof, which is both uniform and solely Coxeter-theoretic.

\begin{theorem}\label{Thm: main}
For a (possibly reducible) real reflection group $W$ of rank $n$ and with multiset of Coxeter numbers $\{h_i(W)\}_{i=1}^n$, the number $MC(W)$ of maximal chains in its noncrossing lattice $NC(W)$ is given by the formula $$MC(W)=\dfrac{n!}{|W|}\cdot\prod_{i=1}^nh_i(W).$$In particular, if $W$ is irreducible with Coxeter number $h$, we have $MC(W)=(h^n\cdot n!)/|W|$.
\end{theorem}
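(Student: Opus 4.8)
The plan is to prove Theorem~\ref{Thm: main} by strong induction on the rank $n$, verifying that the right-hand side of the claimed formula satisfies the Deligne-Reading recursion (Prop.~\ref{Prop: Del-Read rec}). First I would dispose of the base case and the reducible case: for $n=0$ the group is trivial and $MC=1$, matching the empty product; for reducible $W$, Corol.~\ref{Cor. irr to red} already establishes the formula (having been derived directly from the shuffle decomposition of chains), so it suffices to handle irreducible $W$ and invoke the reducible formula for the parabolic subgroups $W_{\langle s\rangle}$, which are generally reducible. Thus the substantive content is the inductive step for irreducible $W$.

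For the inductive step, define $\widetilde{MC}(W):=\frac{n!}{|W|}\prod_{i=1}^n h_i(W)$ as the candidate formula; by induction $\widetilde{MC}(W_{\langle s\rangle})=MC(W_{\langle s\rangle})$ for every maximal parabolic, since these have rank $n-1<n$. It then remains to check that $\widetilde{MC}$ itself obeys the recursion $\widetilde{MC}(W)=\frac{h}{2}\sum_{s\in S}\widetilde{MC}(W_{\langle s\rangle})$. Here I would apply Lemma~\ref{Lem: simples to flats} with $F=\widetilde{MC}$ and $a=h/2$: the lemma shows this recursion on simple generators is \emph{equivalent} to the flat-indexed identity
\[
|W|\cdot\widetilde{MC}(W)=h\cdot\sum_{L\in\mathcal{L}^1_{\mathcal{A}}}|W_L|\cdot\widetilde{MC}(W_L).
\]
Substituting the definition of $\widetilde{MC}$, the factors $|W|$ and $|W_L|$ cancel against the denominators, and (since $\dim L=1$ forces the essential part of $W_L$ to have rank $n-1$) the identity reduces to
\[
n\cdot h\cdot\prod_{i=1}^n h_i(W)\;=\;h\cdot n!\cdot\sum_{L\in\mathcal{L}^1_{\mathcal{A}}}\prod_{i=1}^{n-1}h_i(W_L),
\]
which after using $h_i(W)=h$ for irreducible $W$ (Prop.~\ref{Prop: L_W is mult by h}, giving $\prod h_i(W)=h^n$) becomes exactly equation~\eqref{eq intro h^(n-1)-h_i(W_L)}, namely $h^{n-1}\cdot n=\sum_{L}\prod_{i=1}^{n-1}h_i(W_L)$.

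The final and conceptually central step is to recognize that this last identity is precisely the coefficient of $t^1$ in the parabolic recursion of Coxeter numbers, Theorem~\ref{Thm: rec cox nums}. Expanding $(t+h)^n$ on the left gives $\binom{n}{1}h^{n-1}t=nh^{n-1}t$ as its linear coefficient, while on the right the sum over flats $X$ with $\dim(X)=1$ contributes $\bigl(\sum_{L\in\mathcal{L}^1_{\mathcal{A}}}\prod_{i=1}^{n-1}h_i(W_L)\bigr)t$; equating these yields exactly what we need. I expect the main obstacle to be purely bookkeeping rather than conceptual: one must carefully track the passage from irreducible $W$ (where $\prod_i h_i(W)=h^n$) to the reducible parabolics $W_L$ (where the multiset $\{h_i(W_L)\}$ genuinely mixes the Coxeter numbers of several components), and verify that Lemma~\ref{Lem: simples to flats} applies to $\widetilde{MC}$ as a bona fide function of Coxeter type. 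Once the algebra is aligned, the whole argument collapses to matching a single coefficient in Theorem~\ref{Thm: rec cox nums}, which is the elegant payoff of routing the Deligne-Reading recursion through the $W$-Laplacian spectral identity.
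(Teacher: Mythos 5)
Your proposal is correct and follows essentially the same route as the paper: induction on rank, reduction to the irreducible case via Corollary~\ref{Cor. irr to red}, translation of the Deligne--Reading recursion to a sum over lines via Lemma~\ref{Lem: simples to flats}, and extraction of the coefficient of $t^1$ in Theorem~\ref{Thm: rec cox nums}. The only blemish is a bookkeeping slip in your intermediate display (the cancellation actually gives $n!\cdot\prod_{i=1}^n h_i(W)=h\cdot (n-1)!\cdot\sum_{L}\prod_{i=1}^{n-1}h_i(W_L)$, not the equation you wrote), but the identity you ultimately reduce to, $h^{n-1}\cdot n=\sum_{L}\prod_{i=1}^{n-1}h_i(W_L)$, is the correct one and is exactly the paper's equation~\eqref{EQ: h^{n-1}-h_i(W_L)}.
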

\begin{proof}
The proof proceeds by induction. The case of the symmetric group $S_2$, which is the unique reflection group of rank $n=1$, is trivial; $NC(W)$ is a lattice with only two elements and thus a single maximal chain. Indeed, we have $MC(S_2)=1=(2^1\cdot 1)/2!$.

Let us assume now that the theorem is proven for all reflection groups of rank at most $n-1$. We only have to prove the statement for \emph{irreducible} groups of rank $n$ as the argument of Corol.~\ref{Cor. irr to red} extends the result (trivially and uniformly) to the reducible case. 

Assuming $W$ is an irreducible real reflection group of rank $n$ and Coxeter number $h$, Lemma~\ref{Lem: simples to flats} allows us then to rewrite the Deligne-Reading recursion of Prop.~\ref{Prop: Del-Read rec} as $$|W|\cdot MC(W)=h\cdot \sum_{L\in\mathcal{L}^1_{\mathcal{A}}}MC(W_L)\cdot|W_L|,$$ where the sum is over all lines (1-dimensional flats) of the reflection arrangement $\mathcal{A}$ of $W$. We want to show that $|W|\cdot MC(W)=h^n\cdot n!$ which, after applying the inductive assumption on the terms $MC(W_L)$ above, becomes equivalent to $$h^n\cdot n!=h\cdot\sum_{L\in\mathcal{L}^1_{\mathcal{A}}}(n-1)!\cdot\prod_{i=1}^{n-1}h_i(W_L),$$ or, cancelling the $(n-1)!$ term and the single factor $h$ of the right hand side, \begin{equation}
h^{n-1}\cdot n=\sum_{\mathcal{L}\in\mathcal{L}^1_{\mathcal{A}}}\prod_{i=1}^{n-1}h_i(W_L).\label{EQ: h^{n-1}-h_i(W_L)}
\end{equation} This is a direct corollary of Theorem~\ref{Thm: rec cox nums} by comparing the coefficients of $t^1$ in both sides of the equation. The proof is complete. 
\end{proof}

\begin{remark}\label{Rem: Advantages of our proof}
As we mentioned in the introduction, a generalization of Theorem~\ref{Thm: main} due to Stump and the first author \cite{chapuy_stump} has recently received uniform proofs, one in the case of Weyl groups by Jean Michel \cite{Michel} and one for all real reflection groups (which also applies to the complex well-generated case but is not completely uniform there) by the second author \cite{Douvr}. However, both these proofs are quite technical and rely on the representation theory of $W$ and some highly non-trivial manipulation of its characters. 

The proof we present above is case-free and completely elementary (or at least Coxeter-theoretic). It reveals the utility of the $W$-Laplacian in Coxeter combinatorics (and adjacent areas, see \S~\ref{Sec: implications braid group}).
\end{remark}

\subsection{The case of complex reflection groups}
\label{Sec: The well-generated case}
Coxeter elements exist also in the family of \emph{well generated} complex reflection groups, and our derivation of the chain number formula of Thm.~\ref{Thm: main} extends to them as well (and this is already known, via a case-by-case check, by \cite[Prop.~7.6]{Bessis} or \cite{chapuy_stump}). However, in this setting our proof is also not completely uniform and moreover relies eventually on the difficult geometry of the Lyashko-Looijenga morphism, so we only briefly sketch it here.

The Deligne-Reading recursion may be replaced by a simpler one which only keeps track of the last reflection in a reduced factorization of the Coxeter element. Then the recurrence on the chain numbers becomes
\[
MC(W)=\sum_{[L]\in\mathcal{L}^1_{\mathcal{A}}/W}\op{Krew}^{}_W(L)\cdot MC(W_L),
\]
where the \emph{Kreweras number} $\op{Krew}_W(L)$ counts the noncrossing partitions whose fixed space is conjugate to the flat $L$. This is easily seen to be equivalent to the expression in \eqref{EQ: h^{n-1}-h_i(W_L)} because of the following formula (see \cite[Prop.~83, 85]{theo_thesis}) for Kreweras numbers of lines:
\begin{equation}
\op{Krew}_W^{}(L)=\dfrac{h}{[N(L):W_L]},\label{EQ: krew nums}
\end{equation}
where $N(L)$ is the setwise normalizer of $L$. Moreover, our Theorem~\ref{Thm: rec cox nums} holds as is for arbitrary complex reflection groups, as we have shown in \cite[\S~8]{chapuy_douvr}, so that the proof of Thm.~\ref{Thm: main} for well generated groups $W$ proceeds in essentially the same way. 

The difference is that the formula in \eqref{EQ: krew nums} relies \cite[Prop.~85]{theo_thesis} on the transitivity of the \emph{Hurwitz action} on reduced reflection factorizations of a Coxeter element, a fact still known only after case-by-case calculations \cite[Prop.~7.6]{Bessis}.

\section{The Fomin-Reading recursion}
\label{Sec: Fomin-Reading}


So far, our techniques were applied only to the chain number $MC(W)$ and did not seem able to produce the formula of Thm.~\ref{Prop: Chapoton's formula} for the Zeta polynomial of $NC(W)$. One part of them however, the Deligne-Reading recursion, has a meaningful analog which we briefly discuss here. 

The number $\mathcal{Z}\big(NC(W),k\big)$ of length-$k$ chains in $NC(W)$ also counts (and this fact has uniform proofs \cite{tzanaki,cataland}) the maximal facets of the \emph{generalized cluster complex} $\Delta^{(k)}(W)$ associated to $W$ (see \cite{FR}). Now, Fomin-Reading have defined a cyclic action on $\Delta^{(k)}(W)$, which translates to the following recursion \cite[Prop.~8.3]{FR} for the Zeta polynomials (and does so in a fashion that subsumes the Deligne-Reading recursion). If $h$ is the Coxeter number of $W$ and $n$ its rank, we have
\[
\mathcal{Z}\big(NC(W),k\big)=\dfrac{kh+2}{2n}\cdot\sum_{s\in S}\mathcal{Z}\big(NC(W_{\langle s\rangle},k\big).
\]
One could then give a uniform proof of Chapoton's formula of Thm.~\ref{Prop: Chapoton's formula} by proving the resulting recurrence on Coxeter numbers and fundamental degrees. Furthermore, applying Lemma~\ref{Lem: simples to flats} this would mean to prove the following equation
\begin{equation}
\prod_{i=1}^n(kh+d_i)=\dfrac{kh+2}{n}\cdot\sum_{L\in\mathcal{L}^1_{\mathcal{A}}}\prod_{i=1}^{n-1}\big(kh_i(W_L)+d_i(W_L)\big),\label{EQ: FR1}
\end{equation}
where $\{d_i\}$ are the fundamental degrees of $W$, while $\{d_i(W_L)\}$ are those of the parabolic subgroup $W_L$ and the indexings $h_i(W_L)$ and $d_i(W_L)$ are compatible in the obvious way.

Moreover, the previous formula has a generalization for flats of arbitrary dimension $r$, similarly to how Thm.~\ref{Thm: rec cox nums} generalizes \eqref{EQ: h^{n-1}-h_i(W_L)}. It is shown in \cite{BDJ} \emph{but in a case-by-case way}, after a double counting of faces and facets in $\Delta^{(k)}(W)$, that
\begin{equation}
\binom{n}{r}\prod_{i=1}^n(kh+d_i)=\sum_{X\in\mathcal{L}^r_{\mathcal{A}}}\Big(\prod_{i=1}^r(kh+b_i^X+1)\Big)\cdot\Big(\prod_{i=1}^{n-r}(kh_i(W_X)+d_i(W_X)\Big),\label{EQ: FR2}
\end{equation}
where the numbers $b_i^X$ are the \emph{Orlik-Solomon exponents} (see \cite{OS_nu}) associated to the flat $X$. 

We have not managed to give a uniform proof of the Coxeter-theoretic relations \eqref{EQ: FR1} or \eqref{EQ: FR2} above but we hope this will be possible in the future. One disadvantage we must note however is that these formulas are no longer true for all complex well generated groups (while \eqref{EQ: h^{n-1}-h_i(W_L)} does remain true). In other words, this approach could only lead to a proof of Chapoton's formula uniformly for \emph{real reflection groups}.

\section{Implications for the geometric group theory of Artin-Tits groups}
\label{Sec: implications braid group}

Even though Thm.~\ref{Thm: main} is of an enumerative nature, the seminal work of Bessis \cite{Bessis} relates it with deep results on the geometric group theory of finite Coxeter groups $W$ and their (generalized) braid groups $B(W)$. In fact, it is the last needed ingredient for a \emph{uniform} proof that the Artin and dual-braid presentations of $B(W)$ agree (see \S~\ref{Sec: concordance of presn's}). The concordance of the two presentations has been widely used in recent works; in particular, it was a main ingredient in the proof of the $K(\pi,1)$-conjecture for affine Artin groups by Paolini and Salvetti (see \cite[Rem.~5.4,~Thm.~2.13]{paolini_salvetti}) and in the preceding article of McCammond and Sulway (see proof of \cite[Prop.~10.12]{mccammond_sulway}). However, it was mostly used as a  stepping stone in recursive arguments (dealing with the \emph{spherical} parabolic subgroups of the affine Coxeter groups) and its -at the time- reliance on case by case considerations \emph{might} have gone unnoticed. In this section, we briefly set up some necessary background and we review part of Bessis' work, in order to explain how the uniform proof of Thm.~\ref{Thm: main}  was indeed the only missing ingredient for a uniform proof of the equivalence of the two presentations\footnote{As we mentioned, our paper is only the \emph{second} uniform proof of Thm.~\ref{Thm: main}, but the first Coxeter-theoretic one,~see Remark~\ref{Rem: Advantages of our proof}. However, we believe that we have a good occasion here to clarify the role that Thm.~\ref{Thm: main} plays in Bessis' work for the equivalence of the two presentations.}.

\subsection{The standard presentation for spherical Artin groups}
\label{Sec: Artin-presentation}

Recall first Coxeter's presentation for real reflection groups $W$ of rank $n$ with a system of simple generators $S:=\{s_i\}_{i=1}^n$.
\begin{equation}
W=\langle s_1,\ldots,s_n\ |\ s_i^2=1,\ \underbrace{s_is_js_i\cdots}_{m_{ij}}=\underbrace{s_js_is_j\cdots}_{m_{ij}}\ \rangle,\label{Eq: Coxeter presentation}
\end{equation}
where $m_{ij}-2$ is the number of edges between the vertices $i$ and $j$ of the Coxeter diagram (equivalently $m_{ij}$ is the order of $s_is_j$).

In the case of the symmetric group $S_n=A_{n-1}$, removing the order relations $s_i^2=1$ in \eqref{Eq: Coxeter presentation} gives a presentation for Artin's Braid group $B_n$ on $n$ strands. This was known since Artin's work \cite{artin} but much later Fox and Neuwirth \cite{fox_neu} gave a different proof by realizing $B_n$ as the fundamental group of the configuration space of $n$ points in the plane, or equivalently the space of free orbits under the reflection representation of $S_n$. 

Brieskorn pushed the Fox-Neuwirth approach further by defining the \defn{generalized braid group} (also known as \defn{Artin-Tits group}\footnote{Because a few years earlier Tits \cite{tits} had considered abstract groups defined by similar presentations.}) $B(W)$ as the fundamental group of the space $V^{\op{reg}}/W$ of free orbits under the reflection action of a finite Coxeter group $W$. He proved \cite{briesk_fund} then, relying on the invariant theory of dihedral groups, that $B(W)$ has an Artin-like presentation. Notice that we are using bold face letters $\bm s_i$ to distinguish the generators of $B(W)$.

\begin{theorem}[{\cite{briesk_fund}}]\label{Thm: Artin-Brieskorn presentation}
The generalized braid group $B(W)$ of a finite Coxeter group $W$ of rank $n$ has a presentation given by
\[
B(W)=\langle \bm s_1,\ldots,\bm s_n\ |\ \underbrace{\bm s_i\bm s_j\bm s_i\cdots}_{m_{ij}}=\underbrace{\bm s_j\bm s_i\bm s_j\cdots}_{m_{ij}}\ \rangle,
\]
with formal generators $\bm s_i$ and where the $m_{ij}$ can be read from the Coxeter diagram of $W$ as in \eqref{Eq: Coxeter presentation}.
\end{theorem}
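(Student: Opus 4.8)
The plan is to realize $B(W)$ as the fundamental group of the regular orbit space $V^{\op{reg}}/W$ and to extract the presentation from the stratification of $V$ by the reflection arrangement $\mathcal{A}$. First I would set up the covering-space picture: the complexified complement $V^{\op{reg}} := (V\otimes\mathbb{C})\setminus\bigcup_{H\in\mathcal{A}}H_{\mathbb{C}}$ carries a free $W$-action, so the projection $V^{\op{reg}}\to V^{\op{reg}}/W$ is a regular covering with deck group $W$. This yields the short exact sequence $1\to P(W)\to B(W)\to W\to 1$, where $P(W)=\pi_1(V^{\op{reg}})$ is the pure braid group and $B(W)=\pi_1(V^{\op{reg}}/W)$. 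The generators $\bm s_i$ are the homotopy classes of paths that start at a fixed basepoint in the fundamental chamber $\mathcal{C}$, cross the wall fixed by $s_i$ once (via a small positive complex half-loop around $H_{i,\mathbb{C}}$), and return; their images in $W$ are exactly the simple reflections $s_i$.

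The key step is to verify the braid relations and to show that they, together with the generators $\bm s_i$, suffice. For the relations I would reduce to the rank-two case: any two simple generators $\bm s_i,\bm s_j$ together with the corresponding walls determine a $2$-dimensional subspace on which $W$ restricts to the dihedral group $I_2(m_{ij})$, and on this slice Brieskorn's computation (using the explicit invariant theory of the dihedral groups, where the discriminant complement is well understood) establishes the relation $\underbrace{\bm s_i\bm s_j\bm s_i\cdots}_{m_{ij}}=\underbrace{\bm s_j\bm s_i\bm s_j\cdots}_{m_{ij}}$. To globalize, I would invoke the codimension-one and codimension-two stratification of $V/W$: by general position, loops and homotopies in $V^{\op{reg}}/W$ can be pushed off the strata of real codimension $\geq 3$ in $V\otimes\mathbb{C}$, so $\pi_1$ and its relations are already detected on the union of the chamber and the generic points of codimension-one and codimension-two flats. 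This is the classical Zariski–van Kampen philosophy: generators come from codimension-one walls and relations come from codimension-two flats, and each such flat is governed by the local dihedral slice.

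The main obstacle I anticipate is the \emph{completeness} of the presentation — showing that no relations beyond the braid relations are needed. Exhibiting the generators and checking that the braid relations hold is comparatively routine once the dihedral slices are in hand; the delicate part is proving that these relations generate the entire kernel of the free group surjecting onto $B(W)$. Here I would rely on a Zariski–van Kampen argument applied to a generic pencil of complex lines through $V\otimes\mathbb{C}$, controlling the monodromy of the discriminant so that the only relations introduced are the local ones at the codimension-two strata, each of which is dihedral by the slice analysis above. An alternative route, avoiding the direct topological computation, is to use Deligne's theorem that $V^{\op{reg}}/W$ is a $K(\pi,1)$ space together with the known presentation of $W$ in \eqref{Eq: Coxeter presentation}: one checks that the group defined by the stated presentation surjects onto $B(W)$, that quotienting by $\bm s_i^2=1$ recovers the Coxeter presentation of $W$, and that a comparison of the two exact sequences forces the surjection to be an isomorphism. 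Either way, the crux is the control of relations at codimension two, which is exactly where the dihedral invariant theory does the essential work.
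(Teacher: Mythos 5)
The paper does not actually prove this statement: it is Brieskorn's theorem, quoted from \cite{briesk_fund}, and the text only records that the proof ``relies on the invariant theory of dihedral groups.'' Your primary route --- realize $B(W)=\pi_1(V^{\op{reg}}/W)$, take as generators half-loops through the walls of the fundamental chamber, verify each braid relation on the rank-two slice where $W$ restricts to $I_2(m_{ij})$ and the discriminant is explicitly computable, and argue that all relations are detected in codimension two --- is indeed the strategy of Brieskorn's original argument, so as a reconstruction of the cited proof it is on target.

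Two caveats. First, the step you correctly identify as the crux (completeness of the relations) is essentially the entire content of the theorem, and your sketch leaves it at the level of an assertion: the general-position principle that ``relations live in codimension two'' must be implemented by an actual Zariski--van Kampen computation, and one must check that the \emph{local} meridians at a generic point of a codimension-two flat are identified --- up to conjugations that depend on the chosen connecting paths --- with the \emph{global} generators $\bm s_i,\bm s_j$, so that the local dihedral relation really is the braid relation on $\bm s_i,\bm s_j$ rather than on some uncontrolled conjugates. This is where the real work of \cite{briesk_fund} lies, and your proposal does not carry it out. Second, your alternative route via Deligne's $K(\pi,1)$ theorem does not close the argument as stated: if $A$ denotes the group defined by the Artin presentation, the facts that $A$ surjects onto $B(W)$ and that both groups have $W$ as the quotient by the normal closure of the elements $\bm s_i^2$ do not force the surjection $A\twoheadrightarrow B(W)$ to be injective, since the two kernels could still differ. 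Making that comparison work requires extra input (matching the pure subgroups or the cohomology on both sides), and it is also logically delicate because Deligne's theorem is typically formulated for the group defined by this very presentation. The first route is the right one; the second, as written, has a genuine gap.
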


\begin{remark}
All of this holds also for infinite Coxeter groups. One can define abstractly an associated Artin group by a presentation like in Thm.~\ref{Thm: Artin-Brieskorn presentation} and Van der Lek showed in his thesis \cite{lek_thesis} that it agrees with the fundamental group of the space of free orbits in the Tits cone. We are interested here mostly in Artin groups associated to finite, Euclidean, and affine Coxeter groups; they are called respectively \defn{spherical}, \defn{Euclidean}, and \defn{affine Artin groups}.
\end{remark}

\subsection{The dual braid presentation for spherical Artin groups}
\label{Sec: Dual-braid presentation}

In the early 2000's after works by many authors \cite{bi_ko_lee,bessis_dual,brady_watt} a new \emph{dual} theory was built for Artin-Tits groups, where the whole set of reflections is taken as a generating set and a new \emph{dual-braid presentation} is constructed. Birman-Ko-Lee \cite{bi_ko_lee} first dealt with the standard Braid group $B_n$, and used the new presentation to give better solutions for its word and conjugacy problems. Bessis, who was motivated by the existence of finite order automorphisms in the generalized braid groups \cite[\S~3.5]{bessis_dual},  arrived in parallel to the same construction but now defined it for all spherical Artin groups $B(W)$. Brady and Watt \cite{brady_watt} used the dual-braid presentation to construct simplicial complexes that formed $K(\pi,1)$-spaces for the groups $B(W)$.  

The dual braid approach for braid groups $B(W)$ works roughly as follows. One considers a generating set $\mathcal{R}$ of $W$ along with the \emph{length} order $\leq_{\mathcal{R}}$ it determines on $W$ and the interval $[1,c]_{\leq R}$ between the identity and a \emph{Coxeter} element $c\in W$ under this order. Then, the dual braid presentation records all positive relations that are visible in this interval. This context was abstractified and formalized first by Jean Michel, who used the term \emph{generated groups} \cite[Thm.~0.5.2]{bessis_dual} for such structures, and independently by Jon McCammond who called them \emph{interval groups} \cite[Defn.~2.4]{failure}. The version we give below is slightly different from the original \cite[Thm.~2.2.5]{bessis_dual} but it is immediately equivalent to it (see \cite[Prop.~3.3]{failure}).

\begin{theorem}[{\cite[Thm.~2.2.5]{bessis_dual}}]\label{Thm: Bessis dual presentation}
Let $W$ be a finite Coxeter group and consider for each reflection $\tau_i\in W, i=1,\ldots, N$ a formal generator $\bm t_{\tau_i}$. Then, the generalized braid group $B(W)$ has a presentation given by 
\[
B(W)=\langle \bm{t}_{\tau_1},\bm{t}_{\tau_2},\ldots,\bm{t}_{\tau_N}\ |\ \underbrace{\bm{t}_{\tau_{i_1}}\cdots \bm{t}_{\tau_{i_n}}=\bm{t}_{\tau_{i'_1}}\cdots\bm{t}_{\tau_{i'_n}}=\ldots}_{h^nn!/|W|-\text{many words}}\ \rangle,
\]
where each word $\bm{t}_{\tau_{i_1}}\cdots \bm{t}_{\tau_{i_n}}$ that appears in the relations corresponds to a reduced reflection factorization $\tau_1\cdots\tau_n=c$ of some fixed Coxeter element $c$ of $W$.
\end{theorem}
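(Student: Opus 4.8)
The plan is to obtain the presentation in two stages: first realize $B(W)$ as the group of fractions of a Garside monoid built directly from the interval $NC(W)=[1,c]$, and then rewrite the defining relations into the ``all maximal factorizations agree'' shape displayed in the statement. Throughout I would use that, for a real reflection group, \emph{every} reflection lies below $c$ in the absolute order (one checks, e.g. in $S_n$, that $\ell_{\mathcal R}\big((ij)\,c\big)=n-2$ for each transposition), so the atoms of $NC(W)$ are exactly the $N$ reflections and the generator set $\{\bm t_{\tau_i}\}_{i=1}^N$ is the correct one.

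For the first stage I would construct the \emph{dual braid monoid} $M(W)$ abstractly: one generator $\bm t_\tau$ per reflection, subject to the length-two ``dual braid'' relations $\bm t_\tau\bm t_{\tau'}=\bm t_\rho\bm t_{\rho'}$ whenever $\tau\tau'=\rho\rho'$ are two reduced factorizations of a common rank-two element of $NC(W)$. The essential input is that $NC(W)$ is a lattice (Brady--Watt, Defn.~\ref{Defn: NC(W)}); combined with the fact that $[1,c]$ is a balanced interval all of whose maximal chains have length $n=\ell_{\mathcal R}(c)$, this lets one verify the Garside axioms, with Garside element $\delta$ a lift of $c$ and set of simple elements in bijection with the entire lattice $NC(W)$. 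The monoid $M(W)$ then embeds in its group of fractions $G(W)$, the dual braid group, which already enjoys a presentation by the length-two relations above.

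The second stage is the identification $G(W)\cong B(W)=\pi_1\big(V^{\op{reg}}/W\big)$, and I expect this to be the main obstacle. One realizes $B(W)$ through the braid monodromy of the discriminant hypersurface -- equivalently through the Lyashko--Looijenga map -- checks that the monodromy generators obey the dual braid relations so as to produce a surjection $G(W)\twoheadrightarrow B(W)$, and then forces injectivity by matching a numerical invariant. This is precisely the point at which the chain number $MC(W)=h^n n!/|W|$ is required: it is the degree of the $LL$ map, i.e. the number of top-dimensional words entering the monodromy, and historically it was available only case-by-case. The contribution exploited here is that Theorem~\ref{Thm: main} supplies this count uniformly, closing the comparison without appeal to the classification (cf.\ \cite{Bessis}).

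It remains to pass from the standard local relations to the form in the statement, which I would do via the general principle (McCammond, \cite[Prop.~3.3]{failure}) that in a graded lattice the relations among maximal chains already generate all interval relations: modulo the length-two relations every pair of reduced factorizations of $c$ becomes equal, and conversely each length-two relation reappears as an instance of such an equality. Finally, by \eqref{Eq: MC(W) as Red_R(c)} together with Theorem~\ref{Thm: main}, the number of maximal words is exactly $h^n n!/|W|$, which justifies the label under the brace. Thus the only genuinely hard ingredient is the topological isomorphism of the second stage; the remaining steps are lattice-theoretic bookkeeping already set up in the preceding sections.
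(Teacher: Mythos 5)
First, note that the paper does not actually prove Thm.~\ref{Thm: Bessis dual presentation} --- it is Bessis' theorem, and \S~\ref{Sec: concordance of presn's} only reviews his geometric proof in order to isolate where the chain number enters. Measured against that sketch, your plan assembles the right ingredients and correctly locates the single non-uniform input that Thm.~\ref{Thm: main} now supplies, namely the coincidence of $MC(W)$ with the degree $h^nn!/|W|$ of the Lyashko--Looijenga covering. But your route is genuinely different in its first stage: the paper's argument never passes through the dual braid monoid or Garside theory. It computes $B(W)=\pi_1(\mathbb{C}^n\setminus\mathcal{H})$ directly by Zariski--Van Kampen, with generators $f_1,\dots,f_n$ coming from a generic fiber of $p:\mathcal{H}\twoheadrightarrow Y$ and relations indexed by the braid-monodromy image $\Phi\big(\pi_1(Y\setminus\mathcal{K})\big)\leq B_n$; since $\Phi$ factors through the $LL$ covering, this image has index $\deg LL=h^nn!/|W|$ in $B_n$ and is contained in the Hurwitz stabilizer of a fixed reduced factorization, so the numerical coincidence upgrades the containment to an equality of subgroups and the Zariski--Van Kampen relations become verbatim the ``all reduced factorizations agree'' relations of the statement. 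Your Garside stage (the dual braid monoid, the lattice property of $NC(W)$, and the passage between the length-two and maximal-chain presentations via McCammond's result) is Bessis' \emph{original} construction; it is sound, but it is extra machinery that the presentation-level argument does not need, and it drags in the lattice property as an additional nontrivial input.

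The one step that does not work as you phrase it is ``forces injectivity by matching a numerical invariant'': a degree count cannot force injectivity of a homomorphism between infinite groups. What the count actually accomplishes is the identification of two finite-index subgroups of $B_n$, the monodromy image and the Hurwitz stabilizer, one contained in the other, by comparing their indices ($\deg LL$ against the Hurwitz orbit size, the latter being $MC(W)$ by transitivity). This makes the two presentations have literally the same relation set, which is the isomorphism; there is no separate surjectivity-then-injectivity argument. You have also inverted which half is hard: verifying that the dual braid relations hold among the monodromy elements of $B(W)$ (your ``surjection $G(W)\twoheadrightarrow B(W)$'') amounts to realizing \emph{every} stabilizing braid as a monodromy braid, and that is precisely the step requiring $\deg LL=MC(W)$; the containment $\Phi\big(\pi_1(Y\setminus\mathcal{K})\big)\subseteq\op{Stab}(\xi)$, which would play the role of your ``injectivity,'' is the direction Bessis proves directly.
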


\begin{example}
Let's take for instance the symmetric group $A_2=S_3$, with coxeter element the long cycle $c:=(123)$. There are three reduced factorizations of $c$ in transpositions, namely 
\[
(12)(23)=(123)\quad\quad (23)(13)=(123)\quad\quad (13)(12)=(123).
\]
Then, Thm.~\ref{Thm: Bessis dual presentation} gives us the following presentation for the braid group $B(S_3)=B_3$:
\[
B_3=\langle \bm{t}_{(12)},\bm{t}_{(23)},\bm{t}_{(13)}\ |\  \bm{t}_{(12)}\cdot\bm{t}_{(23)}=\bm{t}_{(23)}\cdot\bm{t}_{(13)}=\bm{t}_{(13)}\cdot\bm{t}_{(12)}\rangle,
\]
which we can rewrite more clearly as
\[
B_3=\langle \bm a,\bm b, \bm c\ |\ \bm a\cdot\bm b=\bm b\cdot \bm c=\bm c\cdot \bm a\rangle,
\]
after setting $\bm{t}_{(12)}=\bm a$, $\bm{t}_{(23)}=\bm b$, $\bm{t}_{(13)}=\bm c$.
It is easy to see now that this presentation is equivalent to the Artin presentation 
\[
B_3=\langle \bm s_1,\bm s_2\ |\ \bm s_1\cdot\bm s_2\cdot\bm s_1=\bm s_2\cdot\bm s_1\cdot\bm s_2\rangle.
\]
\end{example}

\subsection{The concordance between the two presentations after Bessis and Thm.~\ref{Thm: main}}
\label{Sec: concordance of presn's}

As we said earlier, even though the dual braid presentation of the generalized braid group $B(W)$ has been widely used and with much success, any proofs for it have so far relied on case by case arguments. The original proof by Bessis relied on a statement \cite[Fact~2.2.4]{bessis_dual} about lifts of reflections in the Artin-Tits group, which was proven by computer for the exceptional types and via combinatorial models for the infinite families. Brady and Watt \cite{brady_watt} followed a similar approach. 

Bessis later \cite[Remark~8.9]{Bessis} gave a different geometric proof of Thm.~\ref{Thm: Bessis dual presentation} that relies on Thm.~\ref{Thm: main} and expands on his previous work \cite{Bessis-Zariski}; it proceeds as follows (we try to keep the notation of \cite[\S~4]{Bessis} as much as possible). The Shephard-Todd-Chevalley theorem identifies the space of orbits $V/W$ with an affine complex space $\mathbb{C}^n$, and the free orbits $V^{\op{reg}}/W$ with the complement $\mathbb{C}^n\setminus \mathcal{H}$ of the \emph{discriminant hypersurface} $\mathcal{H}$ of $W$. Then, the generalized braid group $B(W)=\pi_1(\mathbb{C}^n\setminus \mathcal{H})$ can be computed by a version of the classical Zariski--Van-Kampen method. 

In our setting, the hypersurface $\mathcal{H}$ can be realized as a branched covering of degree $n$ over a base space $Y$ and with branch locus $\mathcal{K}\subset Y$. The generic fiber of the projection $p:\mathcal{H}\twoheadrightarrow Y$ has $n$ points, and loops around these points in $\mathbb{C}^n\setminus\mathcal{H}$ generate the fundamental group $B(W)=\pi_1(\mathbb{C}^n\setminus \mathcal{H})$. If we write $f_1,\ldots,f_n$ for these formal generators, the set of relations they must satisfy is determined by the fundamental group $\pi_1(Y\setminus\mathcal{K})$. This is a construction known as the braid monodromy \cite{monodr_2,monodr_3,monodr_1} of the hypersurface $\mathcal{H}$ (with respect to $Y$ and $\mathcal{K}$); we explain further below.

Every element $g\in \pi_1(Y\setminus\mathcal{K})$ acts on a generic fiber of $p$; keeping track of the resulting permutation of the $n$ points gives us a map $\pi_1(Y\setminus\mathcal{K})\rightarrow S_n$ to the symmetric group, known as the \emph{monodromy} of the branched covering; if we moreover keep track of \emph{how} the $n$ points move around each other, we obtain the \emph{braid monodromy}: a map $\Phi:\pi_1(Y\setminus\mathcal{K})\rightarrow B_n$ to the braid group $B_n$. These braids $\phi_g:=\Phi(g)\in B_n$ act on the formal generators $f_i$ with the usual action of $B_n$ on the free group $F_n:=\langle f_1,\ldots,f_n\rangle$ via automorphisms. The  Zariski--Van-Kampen presentation is then given by
\begin{equation}
B(W)=\pi_1(\mathbb{C}^n\setminus\mathcal{H})=\big\langle f_1,\ldots,f_d\ |\ f_i=\phi_g(f_i),\ 1\leq i\leq n,\ g\in\pi_1(Y\setminus\mathcal{K}) \big\rangle.\label{ZVK pres}
\end{equation}

Of course one can simplify the presentation \eqref{ZVK pres} by picking suitable generators of the group $\pi_1(Y\setminus\mathcal{K})$, but this might be a difficult task. Bessis sidesteps this process (and this is a remarkable novelty of his approach) and shows that the whole map $\Phi:\pi_1(Y\setminus\mathcal{K})\rightarrow B_n$ is induced by a \emph{covering map} $LL:Y\setminus\mathcal{K}\rightarrow \op{Conf}_n(\mathbb{C})$, where $\op{Conf}_n(\mathbb{C})$ is the configuration space of $n$ distinct points in the plane. The covering map $LL$ is known as the Lyashko-Looijenga morphism; it corresponds to a subgroup of index $h^nn!/|W|$ in $B_n$, which Bessis identifies as the stabilizer of some fixed reduced reflection factorization of the Coxeter element under the Hurwitz action of $B_n$. Because of this, the Zariski--Van-Kampen presentation \eqref{ZVK pres} can be immediately rewritten as the one in Thm.~\ref{Thm: Bessis dual presentation}. In fact, this is precisely the reason we chose to give the dual braid presentation in that format as opposed to the original version of Bessis.

\subsubsection*{The reliance on the numerological coincidence}\ \newline
The identification of $\Phi\big(\pi_1(Y\setminus\mathcal{K})\big)$ as a stabilizer relies on the fact that the \emph{degree} $h^nn!/|W|$ of the $LL$ map agrees with the number $MC(W)$ of \eqref{Eq: MC(W) as Red_R(c)}; \emph{this is the only ingredient of Bessis' approach that lacked a uniform proof}\footnote{In proving the $K(\pi,1)$ conjecture Bessis also relies on other things, like the lattice property of $NC(W)$, but none of those is used for the dual braid presentation of Thm.~\ref{Thm: Bessis dual presentation}}. Bessis shows that any braid $\phi_g=\Phi(g)\in B_n$ must fix the given reduced factorization, but can only guarantee that all stabilizing braids are realizable as elements $\phi_g\in B_n$ because otherwise the index of $\Phi\big(\pi_1(Y\setminus\mathcal{K})\big)$ in $B_n$ would be higher (see \cite[Thm.~7.5,~Prop.~7.6]{Bessis} and the adjacent discussions). We record the analysis of this section with the following remark.
\begin{remark}
Our uniform argument for Thm.~\ref{Thm: main}, using the $W$-Laplacian and the parabolic recursion (Thm.~\ref{Thm: rec cox nums}) on Coxeter numbers, completes after \cite{Bessis} a case-free proof of the agreement between the dual-braid (\S~\ref{Sec: Dual-braid presentation}) and standard (\S~\ref{Sec: Artin-presentation}) presentations for spherical Artin groups. As we said, the representation-theoretic proof of \cite{Douvr} could also be used for this purpose.
\end{remark}

\begin{remark}
Of course one might further want to completely avoid the numerological coincidence and give an a priori explanation of why the $LL$ map \emph{must} count reduced reflection factorizations of the Coxeter element. The most promising approach towards this is via the theory of Frobenius manifolds as in the work of Hertling and Roucairol, where an equivalent statement \cite[Thm.~7.1]{hertling} is proven for the simply laced finite Coxeter groups in the context of ADE singularities.
\end{remark}

\section{Acknowledgments}

The second author would like to thank Christian Stump, Georges Neaime, and Nathan Williams, for their comments on this work and for many interesting Coxeter-theoretic discussions.

The second author would also like to express his appreciation for the bar Hydronetta, in the greek island Hydra\footnote{Hydra lies across from Spetses, another Greek island famous to representation theorists as the birthplace of the eponymous project \cite{spetses}.}. While attending the online FPSAC 2020 from that establishment, we enjoyed an uninterrupted supply of excellent strawberry daiquiris, consumed primarily as an homage to Henry Miller who wrote some of his best work there \cite{collosus}, which proved crucial to the development of the last two sections of this paper.

\bibliographystyle{alpha}
\bibliography{biblio}

\end{document}